\definecolor{darkgrey}{rgb}{0.75,0.75,0.75}
\newcommand{\cadlag}{c\`adl\`ag}
\newtheorem{theorem}{Theorem}
\newtheorem{proposition}{Proposition}[section]
\newtheorem{lemma}[proposition]{Lemma}
\newtheorem{definition}[proposition]{Definition}
\theoremstyle{definition}
\newtheorem{remark}[proposition]{Remark}
\newtheorem{example}[proposition]{Example}
\newcommand\unnumberedfootnote[1]{ %
        \let\temp=\thefootnote %
        \renewcommand{\thefootnote}{}%
        \footnote{#1}%
        \let\thefootnote=\temp%
        \addtocounter{footnote}{-1}}
\renewcommand{\@fnsymbol}[1]{\ensuremath{%
   \ifcase#1\or 1\or 2\or 3\or
   \mathsection\or \mathparagraph\or \|\or 1\or
   2\or 3 \else\@ctrerr\fi}}
\numberwithin{equation}{section}
\begin{document}
\title{Some limit results for Markov chains indexed by trees}
\author{Peter Czuppon, Peter Pfaffelhuber} \date{\today}

\maketitle

\begin{abstract}
  \noindent
  We consider a sequence of Markov chains $(\mathcal X^n)_{n=1,2,...}$
  with $\mathcal X^n = (X^n_\sigma)_{\sigma\in\mathcal T}$, indexed by
  the full binary tree $\mathcal T = \mathcal T_0 \cup \mathcal T_1
  \cup ...$, where $\mathcal T_k$ is the $k$th generation of $\mathcal
  T$. In addition, let $(\Sigma_k)_{k=0,1,2,...}$ be a random walk on
  $\mathcal T$ with $\Sigma_k \in \mathcal T_k$ and
  $\widetilde{\mathcal R}^n = (\widetilde R_t^n)_{t\geq 0}$ with
  $\widetilde R_t^n := X_{\Sigma_{[tn]}}$, arising by
  observing the Markov chain $\mathcal X^n$ along the random walk. We
  present a law of large numbers concerning the empirical measure
  process $\widetilde{\mathcal Z}^n = (\widetilde Z_t^n)_{t\geq 0}$
  where $\widetilde{Z}_t^n = \sum_{\sigma\in\mathcal T_{[tn]}}
  \delta_{X_\sigma^n}$ as $n\to\infty$. Precisely, we show that if
  $\widetilde{\mathcal R}^n \xRightarrow{n\to\infty} \mathcal R$ for some Feller
  process $\mathcal R = (R_t)_{t\geq 0}$ with deterministic initial
  condition, then $\widetilde{\mathcal Z}^n \xRightarrow{n\to\infty} \mathcal Z$
  with $Z_t = \delta_{\mathcal L(R_t)}$.
\end{abstract}

\unnumberedfootnote{Keywords: Tree-indexed Markov chain, weak
  convergence, tightness, random measure, empirical measure}

\unnumberedfootnote{AMS Subject classification: 60F15;
  60F05} 

\section{Introduction}
In \cite{benjamini}, Benjamini and Peres introduced the notion of a
tree-indexed Markov chain. Since then, a lot of effort has been spent
in studying weak and strong laws of large numbers for very general
types of and even possibly random trees
\cite{liu2003,liu2004,yang2003,yang2006,takacs,Guyon2007}. 

Our work is motivated by an observation in microbiology, where a
population of bacteria is growing (along a binary tree, say), and
every individual bacterial cell is in a certain {\it state} (e.g.\
some gene expression profile), which can be -- atleast partially --
inherited. It has been observed for a long time that such populations
tend to be heterogeneous although all cells carry the same genome; see
\cite{SpudichKoshland1976} for an early reference.

The question which has arisen is about the mechanisms which are
responsible for such phenotypic heterogeneity. Two competing views
exist: either, random fluctuations lead to heterogeoity
\cite{McAdamsArkin1999,Elowitz2002} or social interactions of cells
together with a regulatory mechanism are key drivers for heterogeneity
\cite{SnijderPelkmans2011,Perlkmans2012}. Several examples are today
known to fall in one of the two categories; see the Review
\cite{Avery2006}.

In this manuscript, we analyse one consequence of the first view,
i.e.\ a law of large numbers. This results entails that the dynamics
of single cells can be stochastic while the behavior of the whole
population becomes deterministic. We will define a Markov kernel
dependent on some scaling parameter $n$ (which will tend to infinity)
and look at the empirical measure process in the $[nt]$-th generation
of the population, $t\geq 0$, which corresponds to a time-scaling of
the process of empirical measures. We will prove the weak convergence
of the empirical measure process, which will be a deterministic limit
(if the initial distribution is deterministic).

After presenting the general setup in Section~\ref{S:setup}, we present
our main result in Theorem~\ref{T1} in Section~\ref{S:results},
together with two simple examples. Then, we give the proof of
Theorem~\ref{T1} in Section~\ref{S:proof}.


\section{Setup}
\label{S:setup}
Let 
$$\mathcal T = \bigcup_{k=0}^\infty \mathcal T_k,\qquad 
\mathcal T_0 = \{\emptyset\}, \qquad \mathcal T_k = \{0,1\}^k,
k=1,2,...$$ be a \emph{complete binary tree}, rooted at $\emptyset \in
\mathcal T_0$, where $\sigma 0, \sigma 1 \in \mathcal T_{k+1}$ are the
two children of $\sigma \in \mathcal T_k$, $k=0,1,2,...$ For $\sigma
\in \mathcal T_k$ and $j\leq k$, we denote by $\pi_j \sigma$ the
prefix of $\sigma$ of length $j$. On $\mathcal T$, we set $|\sigma| =
k$ iff $\sigma\in\mathcal T_k$ and in addition, set $\pi_{-1} \sigma
:= \pi_{|\sigma|-1}\sigma$, the immediate ancestor of $\sigma$. Define
the $\leq$-relation by writing
$$ \sigma \leq \tau \quad \text{ iff }\quad \text{there is $j$ such that }
\pi_j \tau = \sigma$$ and
$$ \tau \wedge\tau' := \sup\{\sigma: \sigma \leq \tau, \sigma\leq \tau'\}$$
as the \emph{most recent common ancestor} of $\tau$ and $\tau'$.

~

Let $(E,r)$ be a complete and separable metric space, and denote by
$\mathcal B(E)$ the Borel-$\sigma$-field, or the set of bounded
measurable functions (with an abuse of notations). A stochastic
process $\mathcal X = (X_\sigma)_{\sigma\in\mathcal T}$ is called a
\emph{time-homogeneous, tree-indexed Markov chain} (extending a notion
introduced in \cite{benjamini}), if there is a Markov transition
kernel $p$ from $E$ to $\mathcal B(E^2)$ (the Borel-$\sigma$-field on
$E^2$) such that for all $\sigma \in \mathcal T$ and $A_0, A_1 \in
\mathcal B(E)$,
\begin{align*}
  \mathbf P(X_{\sigma 0} \in A_0, X_{\sigma 1} \in A_1 & | X_{\tau} =
  x_\tau \text{ for } \tau\in\mathcal T \text{ with } \tau\wedge
  \sigma \leq \sigma) \\ & = \mathbf P(X_{\sigma 0} \in A_0, X_{\sigma
    1} \in A_1 | X_{\sigma} = x_{\sigma}) = p(x_{\sigma}, A_0\times
  A_1).
\end{align*}
With $\mathcal X$, we connect the Markov chain $\mathcal
R = (R_n)_{n=0,1,2,...}$, with transition kernel
\begin{align*}
  p_{\mathcal R}(x,A) & := 
  \tfrac 12 (p(x,A\times E) + p(x,E\times A)).
\end{align*}
Here, $\mathcal R$ arises from observing the state of
$\mathcal X$ when walking along $\mathcal T$ starting from the root
from $\sigma$ to $\sigma 0$ and $\sigma 1$ purely at random. Another representation of $\mathcal R$ 
is as follows: Let
$(\Sigma_k)_{k=0,1,2,...}$ be a symmetric random walk on $\mathcal T$
(independent of $\mathcal X$), i.e.\ $\Sigma_k \in \mathcal T_k$
almost surely and $\mathbf P(\Sigma_{k+1} = \sigma0|\Sigma_k=\sigma) =
\mathbf P(\Sigma_{k+1} = \sigma1|\Sigma_k=\sigma) = \tfrac 12$. Then,
$\mathcal R \stackrel d = (X_{\Sigma_k})_{k=0,1,2,...}$.

If $(X_{\sigma})_{\sigma\in\mathcal T}$ is a (time-homogeneous) Markov
chain, we then define the \emph{process of empirical measures}
$\mathcal Z = (Z_k)_{k=0,1,2,...}$ through
$$ Z_k := 2^{-k} \sum_{\sigma \in \mathcal T_k} \delta_{X_\sigma}.$$
Note that $\mathcal Z$ takes values in $\mathcal P(E)$, the set of
probability measures on $\mathcal B(E)$ and that $\mathcal Z$ is a
non-homogeneous Markov chain (indexed by $k=0,1,2,...$).

\begin{remark}[Symmetric, tree-indexed Markov chains]

  The idea to consider different transition mechanisms to the two
  different children comes from the work of \cite{Guyon2007}. A
  special, classical case is taht of a symmetric tree-indexed Markov
  chain as follows:

  We call a time-homogeneous, (tree-indexed) Markov chain with
  transition kernel $p$ (from $E$ to $\mathcal B(E^2)$)
  \emph{symmetric}, if there is a Markov transition kernel $q$ (from
  $E$ to $\mathcal B(E)$) such that for all $x\in E$, $A_0,
  A_1\in\mathcal B(E)$
  $$ p(x, A_0\times A_1) = q(x, A_0) \cdot q(x, A_1).$$
  In other words, the transitions from $X_\sigma$ to $X_{\sigma 0}$
  and to $X_{\sigma 1}$ are independent. In this case, we have that
  $R_k \stackrel d = X_\sigma$ for all $\sigma \in \Sigma_k$.
\end{remark}

~

\noindent
In the next section, we will deal with a sequence $(\mathcal
X^n)_{n=1,2,...}$ of tree-indexed Markov chains.

\section{Results}\label{S:results}
Now, we state our main limit theorem for the setup given in the last
section.  Therefore, let $(\mathcal X^n)_{n=1,2,...}$ be a sequence of
tree-indexed Markov chains with complete and separable metric state
spaces $(E^n,r^n)_{n=1,2,...}$. As a limiting state space, we have a
complete separable metric space $(E,r)$ and Borel-measurable maps
$\eta^n: E^n\to E$. 

Let $\mathcal R^n$ be the process of observing $\mathcal X^n$ when
moving randomly along the tree.  We denote the corresponding
transition kernel by $p_n$ (for $\mathcal X^n$) and $p_{\mathcal
  R^n}$, respectively. Moreover, let $\mathcal Z^n$ be the process of
empirical measures based on $\mathcal X^n$, which has state space
$\mathcal P(E^n)$, $n=1,2,...$ Our goal is to find sufficient
conditions for $\mathcal X^n$ (via $\mathcal R^n$), such that the
process of empirical measures $\mathcal Z^n$ converges, and to
characterize the limit process. We first recall some basic notation.

\begin{remark}[Notation]
  Throughout the manuscript, we will consider a complete and separable
  metric space $(E,r)$. The space of (continuous,) real-valued,
  bounded functions on $E$ are denoted by $\mathcal B(E) (\mathcal
  C_b(E))$. Weak convergence is denoted by $\Rightarrow$.  If $f:
  [0,\infty) \to E_1$ and $\eta: E_1\to E_2$, we write, abusing
  notation, $\eta\circ f = \eta(f)$ for the function $\eta\circ f:
  t\mapsto\eta(f(t))$ If $(E_1,r_1), (E_2,r_2)$ are two metric spaces,
  $\eta: E_1\to E_2$ is measurable, and $\nu\in\mathcal P(E_1)$, we
  define the image measure of $\nu$ under $\eta$ by $\eta_\ast \nu \in
  \mathcal P(E_2)$, i.e.\ $\eta_\ast \nu(A_2) = \nu(\eta^{-1}(A_2))$.
  Sometimes, we write $\langle z, \varphi\rangle := \int \varphi dz$
  for $z\in\mathcal P(E)$ and $\varphi\in\mathcal B(E)$. For
  $f\in\mathcal C_b(E)$ we write $||f|| := \sup_{x\in E} |f(x)|$.
\end{remark}

\noindent
We need two more notions.

\begin{definition}[Feller property, compact containment
  condition\label{def:compCont}]
  Recall that $(E,r)$ is complete and separable.
  \begin{enumerate}
  \item A Markov process $\mathcal X = (X_t)_{t\geq 0}$ with state
    space $E$ and \cadlag\ paths satisfies the \emph{Feller property},
    iff (i) $X_t\xRightarrow{t\to 0} X_0$ and (ii) the map $x\mapsto
    \mathbf E[f(X_t)|X_0=x]$ is continuous for all $f\in\mathcal
    C_b(E)$, $t\geq 0$ and all $x\in E$. Equivalently, let
    $(S_t)_{t\geq 0}$ be the semigroup of $\mathcal X$, i.e.\ $S_tf(x)
    = \mathbf E[f(X_t)|X_0=x]$.  Then, $\mathcal X$ is a
    Feller-process iff $(S_t)_{t\geq 0}$ is a Feller semigroup, i.e.\
    (i) $S_tf(x) \xrightarrow{t\to 0}f(x)$ for all $x\in E$ and
    $f\in\mathcal C_b(E)$ and (ii) $S_tf \in \mathcal C_b(E)$ if
    $f\in\mathcal C_b(E)$. We say that (iii) $(S_t)_{t\geq 0}$ is a
    contraction iff $||S_tf|| \leq ||f||$ and (iv) $(S_t)_{t\geq 0}$
    is strongly continuous iff
    $||S_tf - f|| \xrightarrow{t\to 0} 0$.\\
    We say that an operator $G_{\mathcal X}: \mathcal D(G_{\mathcal
      X}) \subseteq \mathcal C_b(E) \to \mathcal C_b(E)$ generates a
    strongly continuous semigroup $(S_t)_{t\geq 0}$ if
    \begin{align}\label{eq:gener}
      G_{\mathcal X} f(x) := \lim_{t\to 0} \tfrac 1t (S_tf(x) - f(x))
    \end{align}
    for all $f\in\mathcal C_b(E)$ for which the limit
    in~\eqref{eq:gener} exists.\\
    Recall that if $(E,r)$ is locally compact, every Feller semigroup
    is a strongly continuous contraction semigroup (\cite{kall},
    Theorem 17.6) and is uniquely characterized by its generator
    (\cite{kall}, Lemma 17.5)
  \item For a sequence $(X^1_t)_{t\geq 0}, (X^2_t)_{t\geq 0},...$ of
    $E$-valued stochastic processes, we say that the \emph{compact
      containment condition} (in $E$) holds, if for every
    $\varepsilon>0$ and $T\geq 0$ there is a compact set
    $K_{\varepsilon, T}\subseteq E$ such that
    \begin{align*}
      \sup_{n=1,2,...} \mathbf P(X_t^n \in K_{\varepsilon, T}^c \text{
        for all } 0\leq t\leq T) < \varepsilon.
    \end{align*}
  \end{enumerate}
\end{definition}

\noindent
Now we can formulate our main result.

\begin{theorem}[Convergence of $\mathcal Z^n$\label{T1}]
  Let $\mathcal X^n, \mathcal R^n, \mathcal Z^n$ be as above,
  $n=1,2,...$ Moreover, let $\widetilde{\mathcal R}^n := (\widetilde
  R_{t}^n)_{t\geq 0}:= (R_{[nt]}^n)_{t\geq 0}$, and
  $\widetilde{\mathcal Z}^n := (\widetilde Z_{t}^n)_{t\geq 0}:=
  (Z_{[nt]}^n)_{t\geq 0}$, $n=1,2,...$ Assume that $\eta^n(X_0^n)
  \xRightarrow{n\to\infty} \nu \in\mathcal P(E)$ and that the compact
  containment condition holds for $\eta^1(\widetilde{\mathcal
    R}^1),\eta^2(\widetilde{\mathcal R}^1),...$

  In addition, assume that there is a linear operator $G_{\mathcal R}:
  \mathcal D(G_{\mathcal R}) \subseteq \mathcal C_b(E) \to \mathcal
  C_b(E)$, which generates a strongly continuous contraction
  semigroup, and such that $\mathcal D(G)$ contains an algebra $\Pi$
  that separates points. For each $\varphi\in\mathcal D(G_{\mathcal
    R})$, there is a sequence $\varphi_1 \in \mathcal B(E^1),
  \varphi_2 \in \mathcal B(E^2),...$ such that $\sup_{n=1,2,...}
  ||\varphi_n|| < \infty$ and
  \begin{align}\label{eq:T11}
      \lim_{n\to\infty}\sup_{x\in E^n} | \varphi\circ \eta^n(x) - \varphi_n(x)| & = 0,\\
      \lim_{n\to\infty}\sup_{x\in E^n} | (G_{\mathcal R} \varphi)\circ \eta^n(x) -
    G_{\widetilde{\mathcal R}^n}\varphi_n(x)| & = 0,\label{eq:T12}
  \end{align}
  where
  \begin{align*}
    G_{\widetilde{\mathcal R}^n}\varphi(x) := n \cdot \mathbf
    E[\varphi(\widetilde R^n_{1/n}) - \varphi(\widetilde
    R^n_{0})|\widetilde R^n_{0}=x].
  \end{align*}
  Then, there is an $E$-valued Feller process $\mathcal R =
  (R_t)_{t\geq 0}$ with $R_0\sim\nu$ and generator $G_{\mathcal R}$
  with $\eta(\widetilde{\mathcal R}^n) \xRightarrow{n\to\infty}
  \mathcal R$, and a $\mathcal P(E)$-valued stochastic process
  ${\mathcal Z} = (Z_t)_{t\geq 0}$ such that
  $\eta^n_\ast\widetilde{\mathcal Z}^n \xRightarrow{n\to \infty}
  {\mathcal Z}$ with $Z_0 \sim \delta_\nu \in\mathcal P(\mathcal
  P(E))$. Moreover, if $\nu = \delta_x$ for $x\in E$, then $Z_t =
  \delta_{\mathcal L(R_t)}$.
\end{theorem}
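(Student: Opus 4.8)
The plan is to run the martingale-problem convergence machinery for Markov processes (as in Ethier and Kurtz, \emph{Markov Processes: Characterization and Convergence}, Ch.~1 and~4) twice: once for the $E$-valued processes $\eta^n(\widetilde{\mathcal R}^n)$, and once for the $\mathcal P(E)$-valued processes $\eta^n_\ast\widetilde{\mathcal Z}^n$. For the first claim, note that $\widetilde{\mathcal R}^n$ is a time-rescaled Markov chain with discrete generator $G_{\widetilde{\mathcal R}^n}$, and that \eqref{eq:T11}--\eqref{eq:T12} say precisely that $G_{\widetilde{\mathcal R}^n}$ converges to $G_{\mathcal R}$ in the sense of uniform approximation through the maps $\eta^n$. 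Since $G_{\mathcal R}$ generates a strongly continuous contraction semigroup $(S_t)$ on $\mathcal C_b(E)$ with a point-separating algebra $\Pi$ in its domain, the associated martingale problem is well posed on the relevant tight class; together with $\eta^n(X_0^n)\Rightarrow\nu$ and the assumed compact containment this yields a Feller process $\mathcal R$ with semigroup $(S_t)$, generator $G_{\mathcal R}$, $R_0\sim\nu$, and $\eta^n(\widetilde{\mathcal R}^n)\Rightarrow\mathcal R$.

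For the second claim, the central computation is that of the discrete generator of $\widetilde{\mathcal Z}^n$ on linear and quadratic functionals $z\mapsto\langle z,\varphi\rangle$ and $z\mapsto\langle z,\varphi\rangle^2$, $\varphi\in\Pi$, with approximants $\varphi_n$. Conditionally on generation $k$, the variables $W_\sigma^n:=\varphi_n(X_{\sigma0}^n)+\varphi_n(X_{\sigma1}^n)$, $\sigma\in\mathcal T_k$, are independent with $\mathbf E[W_\sigma^n\mid\mathcal F_k]=2\,p_{\mathcal R^n}\varphi_n(X_\sigma^n)$, so $\mathbf E[\langle Z_{k+1}^n,\varphi_n\rangle\mid\mathcal F_k]=\langle Z_k^n,p_{\mathcal R^n}\varphi_n\rangle$; rescaling time by $n$ and using \eqref{eq:T11}--\eqref{eq:T12} gives that
\begin{align*}
 M_t^{n}:=\langle\eta^n_\ast\widetilde Z_t^n,\varphi\rangle-\langle\eta^n_\ast\widetilde Z_0^n,\varphi\rangle-\int_0^{t}\langle\eta^n_\ast\widetilde Z_s^n,G_{\mathcal R}\varphi\rangle\,ds
\end{align*}
is, up to errors tending to $0$ uniformly, a martingale. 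For its bracket, $\mathrm{Var}(\langle Z_{k+1}^n,\varphi_n\rangle\mid\mathcal F_k)=2^{-2(k+1)}\sum_{\sigma\in\mathcal T_k}\mathrm{Var}(W_\sigma^n\mid\mathcal F_k)$, and an elementary convexity estimate bounds $\mathrm{Var}(W_\sigma^n\mid\mathcal F_k)$ by a multiple of $p_{\mathcal R^n}(\varphi_n^2)(X_\sigma^n)-\big(p_{\mathcal R^n}\varphi_n(X_\sigma^n)\big)^2$. Because $\Pi$ is an \emph{algebra}, $\varphi^2\in\mathcal D(G_{\mathcal R})$, so \eqref{eq:T11}--\eqref{eq:T12} force this quantity to $0$ uniformly in $X_\sigma^n$, say $\le\varepsilon_n\to0$; summing the geometric series in $k$ then gives $\mathbf E[\sup_{t\le T}(M_t^{n})^2]\le\varepsilon_n\to0$ by Doob's inequality. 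Thus the martingale part disappears in the limit --- this is the analytic heart of the law of large numbers, the $2^{-k}$ normalisation beating the $2^{k}$ leaves being what makes the branching noise negligible.

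It remains to prove tightness of $\eta^n_\ast\widetilde{\mathcal Z}^n$ in $D_{\mathcal P(E)}[0,\infty)$ and to identify the limit. For tightness I would invoke the criteria for measure-valued processes (Jakubowski, or Ethier--Kurtz Thm.~3.9.1 plus a Roelly-type reduction): it suffices to verify a compact containment condition for $\eta^n_\ast\widetilde{\mathcal Z}^n$ in $\mathcal P(E)$, which follows from the assumed compact containment for $\eta^n(\widetilde{\mathcal R}^n)$ via the first-moment identity $\mathbf E[\eta^n_\ast\widetilde Z_t^n(A)]=\mathbf P(\eta^n(\widetilde R_t^n)\in A)$, a maximal inequality, Markov's inequality and Prokhorov's theorem; and tightness of the real processes $\langle\eta^n_\ast\widetilde Z^n_{\cdot},\varphi\rangle$, $\varphi\in\Pi$, which follows from the decomposition above via Aldous' criterion. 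Any subsequential limit $\mathcal Z=(Z_t)$ then satisfies, for $\varphi\in\Pi$, that both $\langle Z_t,\varphi\rangle-\langle Z_0,\varphi\rangle-\int_0^t\langle Z_s,G_{\mathcal R}\varphi\rangle\,ds$ and its square are martingales, hence the bracket of the first vanishes, hence $t\mapsto\langle Z_t,\varphi\rangle$ is the (deterministic, given $Z_0$) unique solution of $\tfrac{d}{dt}\langle Z_t,\varphi\rangle=\langle Z_t,G_{\mathcal R}\varphi\rangle$, namely $\langle Z_t,\varphi\rangle=\langle Z_0,S_t\varphi\rangle$. Since $\Pi$ separates points this determines $Z_t$ (and, via compact containment, the whole path), so the convergence holds along the full sequence. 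When $\nu=\delta_x$ we have $\eta^n_\ast\widetilde Z_0^n=\delta_{\eta^n(X_0^n)}\to\delta_x$ in probability, hence $Z_0=\delta_x$ and $\langle Z_t,\varphi\rangle=S_t\varphi(x)=\mathbf E[\varphi(R_t)\mid R_0=x]$, i.e.\ $Z_t=\mathcal L(R_t)$.

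I expect the main obstacles to be twofold. First, the compact containment for the measure-valued processes: the supremum over time does not commute with the expectation defining the empirical measure, so the first-moment identity alone is not enough and must be combined with a maximal inequality derived from the martingale decomposition. Second, the well-posedness of the two limiting martingale problems under the sole assumption that $G_{\mathcal R}$ generates a strongly continuous contraction semigroup on $\mathcal C_b(E)$ with a point-separating algebra in its domain: here the compact containment is essential, since on compact sets a point-separating algebra is uniformly dense by Stone--Weierstrass, hence convergence-determining on the relevant tight families, which is what makes both uniqueness arguments go through.
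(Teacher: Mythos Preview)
Your proposal is correct and, in broad outline, uses the same skeleton as the paper (Ethier--Kurtz for $\widetilde{\mathcal R}^n$, Jakubowski for tightness of $\widetilde{\mathcal Z}^n$, compact containment via the first-moment identity and Prokhorov). The substantive difference is in how you kill the noise and identify the limit.

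The paper first proves a fixed-time law of large numbers (its Lemma~4.2): for each $t$, $\mathbf{Var}[\langle\widetilde Z^n_t,\varphi\rangle]\to 0$ for \emph{every} $\varphi\in\mathcal C_b(E)$, by picking two random leaves $\Sigma_1,\Sigma_2\in\mathcal T_{[nt]}$, observing that their most recent common ancestor is $O(1)$ generations from the root (Lemma~4.1), and then using the Feller/strong-continuity properties of the limiting semigroup to show the two conditional expectations decouple. This already pins down the one-dimensional (hence, by determinism, finite-dimensional) distributions; the martingale Lemma~4.4 is then used only for tightness of the real coordinates, and its proof \emph{uses} Lemma~4.2 to make the compensator deterministic.

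You instead go straight to the martingale and control its bracket by a one-step estimate: $\mathrm{Var}(W^n_\sigma\mid\mathcal F_k)\le 4\big(p_{\mathcal R^n}(\varphi_n^2)-(p_{\mathcal R^n}\varphi_n)^2\big)(X^n_\sigma)$ (indeed $4\,\mathrm{Var}(\varphi_n(R^n_1)\mid X_\sigma)-\mathrm{Var}(W^n_\sigma\mid X_\sigma)=\mathbf E[(\varphi_n(X_{\sigma0})-\varphi_n(X_{\sigma1}))^2\mid X_\sigma]\ge 0$), and then use $\varphi^2\in\Pi\subset\mathcal D(G_{\mathcal R})$ together with \eqref{eq:T11}--\eqref{eq:T12} to force this one-step variance to $0$ uniformly; the geometric factor $2^{-k}$ from the normalisation makes the sum over generations converge. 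This bypasses the MRCA combinatorics entirely and gives $M^n\to 0$ in $L^2$ directly, after which the limit is identified through the linear evolution $\tfrac{d}{dt}\langle Z_t,\varphi\rangle=\langle Z_t,G_{\mathcal R}\varphi\rangle$.

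Your route is cleaner and more in the spirit of measure-valued martingale problems, but it leans essentially on the algebra hypothesis (to get an approximant for $\varphi^2$), whereas the paper's variance lemma works for arbitrary bounded continuous $\varphi$ and makes explicit the probabilistic reason for determinism (asymptotically independent lineages from a common near-root ancestor). Both arguments use the same compact-containment transfer from $\widetilde{\mathcal R}^n$ to $\widetilde{\mathcal Z}^n$.
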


\begin{remark}[Convergence, Deterministic limit, CLT]
  \begin{enumerate}
  \item Actually, the convergence $\widetilde{\mathcal R}^n \circ\eta^n
    \xRightarrow{n\to\infty} \mathcal R$ was shown in \cite{ethier},
    Corollary~4.8.9, under the assumptions given above.
  \item As the Theorem shows, the limiting process of empirical
    measures $\mathcal Z$ is deterministic (if the initial
    distribution is a Dirac-measure). The heuristics behind this
    result is that two distinct values $X^n_\sigma, X^n_\tau$ with
    $\sigma,\tau \in \mathcal T_{[nt]}$ have already evolved
    independently for $O(n)$ steps. Hence, $\widetilde{Z}_t^n$ is
    approximately given by the empirical measure of $2^{nt}$
    independent processes, which leads to a deterministic limit. This
    argument will be made precise below.
  \item Having obtained a law of large numbers, it would be
    interesting to see a central limit theorem, as well. In the
    present context, this would require a fine analysis of the error
    terms $\varepsilon_n$ appearing in \eqref{eq:113}. We devote this
    study to future research.
  \end{enumerate}
\end{remark}

\noindent
We now give two simple examples for normal and Poisson convergence.

\begin{example}
\begin{enumerate}
\item Let $(Y_\sigma)_{\sigma\in\mathcal T}$ be a family of
  independent, identically distributed random real-valued variables
  with $\mathbf E[Y_\sigma]=0, \mathbf{Var}[Y_\sigma]=1$. Moreover,
  let $X_0^n:=0$ and $(X^n_{\sigma 0}, X^n_{\sigma 1}) :=
  \big(X^n_\sigma + \tfrac{1}{\sqrt n}Y_\sigma, X_\sigma -
  \tfrac{1}{\sqrt n}Y_\sigma\big)$. (In other words, the states of the
  two children of $\sigma$ are a pair of dependent random variables.)
  Then, the process $\mathcal R^n = (R_t^n)_{t=0,1,2,...}$ can be
  written as $R_t^n \stackrel d = \tfrac{1}{\sqrt{n}}\sum_{k=0}^{t-1}
  \widetilde Y_k$, where $(\widetilde Y_k)_{k=0,1,2,...}$ are
  independent and identically distributed with $(\widetilde Y_k)_\ast
  \mathbf P = \tfrac 12 (Y_\sigma)_\ast \mathbf P + \tfrac 12
  (-Y_\sigma)_\ast \mathbf P$, a mixture of the distributions of
  $Y_\sigma$ and $-Y_\sigma$.  Donsker's Theorem yields the
  convergence $\widetilde{\mathcal{R}}^n\xRightarrow{n\to\infty}
  \mathcal B$ to the standard Brownian motion $\mathcal B$. Our
  theorem now says that the limiting process $\mathcal{Z}$ is the law
  of $\mathcal B$, so we find that $\mathcal Z = (N(0,t))_{t\geq 0}$,
  where $N(0,t)$ is the normal distribution with mean $0$ and variance
  $t$.
\item Let $(Y^n_\sigma)_{\sigma\in\mathcal T}$ be a family of
  independent, identically distributed random variables with values in
  $\mathbb Z_+$ and $\mathbf P(Y^n_\sigma>0)=2\lambda/n + o(1/n),
  \mathbf{P}[Y^n_\sigma>1]=o(1/n)$. Moreover, let $X_0^n:=0$ and
  $(X^n_{\sigma 0}, X^n_{\sigma 1}) := \big(X_\sigma, X^n_\sigma +
  Y^n_\sigma\big)$. (In other words, the state of the left child
  equals the state of its parent while the state of the right child
  has a small probability of having increased by~1.  Then, the process
  $\mathcal R^n = (R_t^n)_{t=0,1,2,...}$ can be written as $R_t^n
  \stackrel d = \sum_{k=0}^{t-1} \widetilde Y^n_k$, where $(\widetilde
  Y^n_k)_{k=0,1,2,...}$ are independent and identically distributed
  with $(\widetilde Y^n_k)_\ast \mathbf P = \tfrac 12 \delta_0 +
  \tfrac 12 (Y^n_\sigma)_\ast \mathbf P$, i.e.\ $\mathbf P[\widetilde
  Y^n_k>0]=\lambda/n + o(1/n), \mathbf{P}[\widetilde Y^n_k>1]=o(1/n)$.
  Classical convergence results (see e.g.\ \cite{kall}, Theorem 5.7)
  then show that $\widetilde{\mathcal R}^n$ converges weakly to a
  Poisson process with rate $\lambda$. Consequently, we then have by
  the above theorem that $\mathcal Z = (Z_t)_{t\geq 0}$ with $Z_t =
  \text{Poi}(\lambda t)$.
\end{enumerate}
\end{example}
 
\section{Proof of Theorem~\ref{T1}}
\label{S:proof}
Throughout this section, we build on the same assumptions as in
Theorem~\ref{T1}. We will replace $\eta^n(\widetilde{\mathcal R}^n)$
by $\widetilde{\mathcal R}^n$ and $\eta_\ast^n \widetilde{\mathcal
  Z}^n$ by $\widetilde{\mathcal Z}^n$ in the sequel (and similarly for
the processes without $\sim$). This should not cause confusion and
increase readability.

Before we start, we give basic relationships between the processes
$\widetilde{\mathcal R}^n$ and $\widetilde{\mathcal Z}^n$, which we
will frequently use. (Some more refined relationships will be given in
the proof of Lemma~\ref{l:convZ}. Let $\varphi \in \mathcal
C_b(E)$. Then,
\begin{equation}
  \label{eq:111}
  \begin{aligned}
    \mathbf E[\langle \widetilde Z_t^n, \varphi\rangle] & = \mathbf
    E[\langle Z_{[nt]}^n, \varphi\rangle] = \mathbf
    E\Big[\frac{1}{2^{[nt]}} \sum_{\sigma \in \mathcal T_{[nt]}}
    \langle \delta_{X_\sigma^n}, \varphi\rangle\Big] = \mathbf
    E\Big[\frac{1}{2^{[nt]}} \sum_{\sigma\in \mathcal T_{[nt]}}
    \varphi(X_\sigma^n)\Big] \\ & = \mathbf E[\varphi(R^n_{[nt]})] =
    \mathbf E[\varphi(\widetilde R^n_t)].
  \end{aligned}
\end{equation}
Similarly, we write
\begin{equation}
  \label{eq:112}
  \begin{aligned}
    \langle Z_k^n, \varphi\rangle & = \sum_{\sigma\in\mathcal T_k}
    \varphi(X_\sigma) = \mathbf E[\varphi(R_k^n)|Z_k^n],\\
    \mathbf E[\langle Z_k^n, \varphi\rangle| Z_{k-1}^n] & =
    \mathbf E\big[\mathbf E[\varphi(R_k^n)|Z_k^n,Z_{k-1}^n]|
    Z_{k-1}^n\big] = \mathbf E[\varphi(R_k^n)| Z_{k-1}^n].
  \end{aligned}
\end{equation}

~

\noindent
In the proof of Theorem~\ref{T1}, it suffices to assume that $\nu =
\delta_x$, i.e.\ deterministic starting conditions. (The general case
then follows by mixing over the initial condition.) We need to show
two assertions:
\begin{enumerate}
\item The sequence $(\widetilde{\mathcal Z}^n)_{n=1,2,...}$ is tight.
\item The finite-dimensional distributions of $(\widetilde{\mathcal
    Z}^n)_{n=1,2,...}$ converge, such that $\widetilde{Z}_t^n
  \xRightarrow{n\to\infty} \delta_{\mathcal L(R_t)}$.
\end{enumerate}
For 2., we will show in Lemma~\ref{l:convZ} that $\widetilde{Z}_t^n
\xRightarrow{n\to\infty} \delta_{\mathcal L(R_t)}$ holds for all
$t\geq 0$. Since the right hand side is deterministic, we have already
shown convergence of finite dimensional distribution and we are left
with showing~1. Here, we use Jakubowski's tightness criterion, which
is recalled in Proposition~\ref{P:jaku} in the appendix. For this
criterion, we have to show that (i) $\widetilde{\mathcal Z}^n$
satisfies the compact containment condition in $\mathcal P(E)$ (see
Definition~\ref{def:compCont}) and (ii) that the sequence $(\langle
\widetilde Z_t^1, \varphi\rangle)_{t\geq 0}$, $(\langle \widetilde
Z_t^2, \varphi\rangle)_{t\geq 0}$... is tight for all $\varphi\in\Pi'$
(a vector space which separates points). (i) will be resolved in
Lemma~\ref{l:CompCont}, while (ii) is a result in
Lemma~\ref{l3}. Hence, we are done once we have shown Lemma
~\ref{l:convZ}, ~\ref{l:CompCont} and~\ref{l3}. 

~

\noindent
We start with a fundamental fact, which is based on the fact that two
random leaves from $\mathcal T_n$ have a most recent common ancestor
node which is close to the root. \\
Recall that by \cite{ethier}, Corollary~4.8.9 we already have that
$\widetilde{\mathcal R}^n \xRightarrow{n\to\infty} \mathcal R$ for a
Feller- (hence \cadlag)-process $\mathcal R$.

\begin{lemma}[Convergence at fixed vertices\label{l1}]
  Assume that $\widetilde{\mathcal R}^n
  \xRightarrow{n\to\infty}\mathcal R$ for a \cadlag-process $\mathcal
  R = (R_t)_{t\geq 0}$ with state space $E$. Then, the following
  holds:
  \begin{enumerate}
  \item Let $\sigma_1,...,\sigma_k \in \mathcal T$. Then,
    $$ (X_{\sigma_i}^n)_{i=1,...,k} \xrightarrow{n\to\infty} (R_0)_{i=1,...,k}$$
    in probability.
  \item Let $\Sigma^n_1, \Sigma^n_2$ be two vertices, chosen uniformly
    at random from $\mathcal T_{[nt]}$. Then,
    \begin{align*}
      (X^n_{\Sigma^n_1\wedge\Sigma^n_2},
      X^n_{(\Sigma^n_1\wedge\Sigma^n_2)0},
      X^n_{(\Sigma^n_1\wedge\Sigma^n_2)1}) \xrightarrow{n\to\infty}
      (R_0, R_0, R_0)
    \end{align*}
    in probability.
  \end{enumerate}
\end{lemma}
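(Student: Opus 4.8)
The plan is to reduce both statements to the single convergence $\widetilde{\mathcal R}^n \Rightarrow \mathcal R$, together with the combinatorial observation that two independent uniform leaves of $\mathcal T_{[nt]}$ have a most recent common ancestor of generation $O(1)$. For part (1), I would first observe that each fixed vertex $\sigma_i \in \mathcal T$ sits in generation $|\sigma_i|$, which is a constant; so $X^n_{\sigma_i} \stackrel{d}{=} X^n_{\Sigma_{|\sigma_i|}}$ modulo the choice of the random-walk path, and more precisely, under the coupling $\mathcal R^n \stackrel{d}{=} (X^n_{\Sigma_k})_k$, the value $R^n_{|\sigma_i|} = X^n_{\Sigma_{|\sigma_i|}}$ has the same law as $X^n_{\sigma_i}$ since the kernel $p_n$ does not depend on $\sigma$. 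Hence $X^n_{\sigma_i} \stackrel{d}{=} R^n_{m} = \widetilde R^n_{m/n}$ with $m := |\sigma_i|$ fixed. Since $m/n \to 0$ and $\widetilde{\mathcal R}^n \Rightarrow \mathcal R$ with $\mathcal R$ \cadlag\ (hence right-continuous at $0$) and $R_0 = x$ deterministic, we get $X^n_{\sigma_i} \Rightarrow R_0$; convergence in probability follows because the limit is a constant. To handle the joint statement for $\sigma_1,\dots,\sigma_k$ simultaneously, I would note that for each individual coordinate the limit is the \emph{same} deterministic point $R_0 = x$, so marginal convergence in probability to a constant vector automatically upgrades to joint convergence in probability to $(x,\dots,x)$ — no control of the dependence structure among the $X^n_{\sigma_i}$ is needed.

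For part (2), let $M^n := |\Sigma^n_1 \wedge \Sigma^n_2|$ be the generation of the most recent common ancestor of two independent uniform leaves of $\mathcal T_{[nt]}$. The key combinatorial fact is that $M^n$ is stochastically bounded uniformly in $n$: indeed $\mathbf P(M^n \ge j) = \mathbf P(\pi_j \Sigma^n_1 = \pi_j \Sigma^n_2) = 2^{-j}$ for $j \le [nt]$, so $M^n$ converges in distribution (as $n\to\infty$, for fixed $t>0$) to a geometric-type random variable on $\{0,1,2,\dots\}$ and in particular $M^n = O_{\mathbf P}(1)$. Now condition on the event $\{M^n = j\}$ and on the identity of the common ancestor vertex $\rho := \Sigma^n_1 \wedge \Sigma^n_2 \in \mathcal T_j$: the triple $(X^n_\rho, X^n_{\rho 0}, X^n_{\rho 1})$ has, by the Markov property along the tree and time-homogeneity, exactly the law of $(X^n_{\Sigma_j}, X^n_{\Sigma_j 0}, X^n_{\Sigma_j 1})$, which in turn is distributed as $(R^n_j, \,\cdot\,, \,\cdot\,)$ with the two children obtained from one more application of the kernel $p_n$. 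So, on $\{M^n = j\}$, the triple has the same distribution as evaluating $\mathcal X^n$ at generations $j, j+1, j+1$ starting from the root, i.e. it depends only on $j$ and equals $(\widetilde R^n_{j/n}, \text{two }p_n\text{-children})$ in law. Since $j$ is bounded in probability and $j/n, (j+1)/n \to 0$, and $\widetilde{\mathcal R}^n \Rightarrow \mathcal R$ with $R_0 = x$ deterministic and \cadlag, each of the three coordinates converges in probability to $R_0$. To make the "each child also converges" step rigorous I would invoke part (1) of the lemma with $k = 2$ applied at a fixed small generation, or directly note that $X^n_{\rho 0} \stackrel{d}{=} R^n_{j+1}$ and $X^n_{\rho 1} \stackrel{d}{=} R^n_{j+1}$ by the same kernel-homogeneity argument (the random-walk representation of $\mathcal R^n$ makes $X^n_{\sigma}$ for $|\sigma| = j+1$ have law $R^n_{j+1}$).

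The main obstacle I anticipate is purely bookkeeping: making precise the claim "conditioned on $M^n = j$ and on $\rho$, the triple $(X^n_\rho, X^n_{\rho 0}, X^n_{\rho 1})$ has a law not depending on $\rho$ and equal to that of $(R^n_j, \cdot, \cdot)$." This requires unwinding the definition of a tree-indexed Markov chain — the conditioning event $\{M^n = j, \Sigma^n_1 \wedge \Sigma^n_2 = \rho\}$ is determined by $(\Sigma^n_1, \Sigma^n_2)$, which is independent of $\mathcal X^n$, so it does not disturb the law of $\mathcal X^n$; and then the Markov property plus translation-invariance of the kernel in $\sigma$ give the claimed equality in law. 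Once that is established, the proof is the routine combination: $M^n = O_{\mathbf P}(1)$, a union bound over the (finitely many, with high probability) values of $j$, right-continuity of $\mathcal R$ at $0$, and "convergence to a constant = convergence in probability." I would also remark that for $t = 0$ part (2) is trivial ($\Sigma^n_1 = \Sigma^n_2 = \emptyset$ and the statement reduces to $(X^n_\emptyset, X^n_0, X^n_1) \to (x,x,x)$, which is part (1) with $k=3$).
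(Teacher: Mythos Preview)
There is a genuine gap in your proof of part~(1). You claim that for a fixed vertex $\sigma$ with $|\sigma|=m$ one has $X^n_\sigma \stackrel{d}{=} R^n_m$ ``since the kernel $p_n$ does not depend on $\sigma$.'' This is false in general: time-homogeneity means the \emph{joint} kernel $p_n(x,\cdot)$ on $E^2$ is the same at every vertex, but the marginal laws of the left child and the right child need not agree. Concretely, take the paper's second example, where $(X^n_{\sigma 0}, X^n_{\sigma 1}) = (X^n_\sigma, X^n_\sigma + Y^n_\sigma)$; then $X^n_{00\cdots 0}=0$ deterministically, whereas $R^n_m$ is a random sum. So the law of $X^n_\sigma$ depends on the specific word $\sigma$, not just on $m$, and the identity $X^n_\sigma \stackrel{d}{=} R^n_m$ fails. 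The same flaw reappears in your alternative for part~(2), where you write $X^n_{\rho 0} \stackrel{d}{=} R^n_{j+1}$.

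The paper's fix is short: one does not have equality in law, but one has the coupling $R^n_m = X^n_\sigma$ on the event $\{\Sigma_m = \sigma\}$, which is independent of $\mathcal X^n$ and has probability $2^{-m}$. Hence
\[
\mathbf P\bigl(r(X^n_\sigma,R_0)>\varepsilon\bigr)
= \mathbf P\bigl(r(R^n_m,R_0)>\varepsilon \,\big|\, \Sigma_m=\sigma\bigr)
\le 2^{m}\,\mathbf P\bigl(r(R^n_m,R_0)>\varepsilon\bigr)
\xrightarrow{n\to\infty} 0,
\]
since $R^n_m = \widetilde R^n_{m/n}$ and $m/n\to 0$. (Equivalently, argue by contradiction as the paper does.) With this correction your part~(1) goes through, and your plan for part~(2) --- $M^n = O_{\mathbf P}(1)$, condition on $M^n\le m$, then invoke part~(1) for the finitely many vertices in $\bigcup_{k\le m+1}\mathcal T_k$ --- is exactly the paper's approach and is correct once you rely on the fixed part~(1) rather than on the false distributional identity.
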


\begin{proof}
  Recall that for the (independent) random walk
  $(\Sigma_k)_{k=0,1,...}$ on $\mathcal T$ we have that $R^n_k =
  X^n_{\Sigma_k}$. It suffices to prove the result for deterministic
  $R_0\in E$. By assumption, for all $m\in\mathbb N$,
  \begin{align}\label{eq:l11}
    \mathbf P(r(R_m^n, R_0)>\varepsilon) = \mathbf P(r(\widetilde
    R_{m/n}^n, R_0)>\varepsilon) \xrightarrow{n\to\infty} 0,
  \end{align}
  since $\mathcal R$ has \cadlag\ paths.

  \noindent
  1. Let $\sigma\in \mathcal T$ and $|\sigma|=m$. Assume that the
  assertion does not hold, i.e.\ $X_\sigma^n$ does not converge weakly
  to $R_0$. Let $\varepsilon>0$ such that $\mathbf P(r(X_\sigma^n,
  R_0)>\varepsilon) > \varepsilon$ for all $n$. We have that $$\mathbf
  P(r(R^n_m, X_\sigma^n)\leq \varepsilon) \geq \mathbf P(r(R^n_m,
  X_\sigma^n)\leq \varepsilon, R^n_m= X_\sigma^n) = \mathbf P(R^n_m=
  X_\sigma^n) \geq 2^{-m}$$ for all $\varepsilon>0$, since the random
  walk $(\Sigma_m)_{m=0,1,2,...}$ along we read off $R^n$ has a chance
  of $2^{-m}$ to pass through vertex $\sigma$. Hence, this implies
  that for $\varepsilon>0$ as above
  \begin{align*}
    \mathbf P(r(R_m^n, R_0)>\varepsilon) & \geq \mathbf P(r(R_m^n,
    R_0)>\varepsilon, R_m^n = X_\sigma^n) \geq \mathbf P(r(X_\sigma^n,
    R_0)>\varepsilon, \Sigma_m=\sigma) \\ & = \mathbf P(r(X_\sigma^n,
    R_0)>\varepsilon) \cdot \mathbf P(\Sigma_m=\sigma) \geq
    \varepsilon 2^{-m}
  \end{align*}
  in contradiction to~\eqref{eq:l11}. Hence, 1.\ follows.
  
  \noindent
  2. Let $\varepsilon>0$ and $m$ be large enough for $2^{-m} <
  2\varepsilon$. From 1., we have that
  $(X_\sigma^n)_{\sigma\in\mathcal T_m} \xrightarrow{n\to\infty}
  (R_0)_{\sigma\in\mathcal T_m}$. Moreover, for $n>m$, $\mathbf
  P(\Sigma_1^n \wedge \Sigma_2^n \in \mathcal T_m) = \sum_{k=0}^m
  2^{-(k+1)} = 1 - 2^{-(m+1)} > 1 - \varepsilon$. Hence, we can write
  \begin{align*}
    \mathbf P(r(X_{\Sigma_1^n \wedge \Sigma_2^n}, R_0)>\varepsilon) &
    \leq \mathbf P(r(X_{\Sigma_1^n \wedge \Sigma_2^n},
    R_0)>\varepsilon, \Sigma_1^n \wedge \Sigma_2^n \in \mathcal T_m)+
    \mathbf P(\Sigma_1^n \wedge \Sigma_2^n \notin \mathcal T_m) \\ &
    \leq \mathbf P(\sup_{\sigma\in \mathcal T_m} r(X_\sigma^n,
    R_0)>\varepsilon) + \mathbf P(\Sigma_1^n \wedge \Sigma_2^n \notin
    \mathcal T_m) \\ & \xrightarrow{n\to\infty} 2^{-(m+1)} <
    \varepsilon
  \end{align*}
  by 1.\ and we have shown that $X_{\Sigma_1^n\wedge \Sigma_2^n}
  \xrightarrow{n\to\infty} R_0$ in probability. By the same arguments,
  we also find that $X_{(\Sigma_1^n\wedge \Sigma_2^n)i}
  \xrightarrow{n\to\infty} R_0$ in probability for $i=0,1$ and we are
  done.
\end{proof}

\begin{lemma}[Convergence of $\widetilde{\mathcal Z}^n$ at fixed
  times\label{l:convZ}]
  Consider the same situation as in Theorem~\ref{T1} and let $t\geq
  0$. If $\nu=\delta_x$ for some $x\in E$, we have that $\widetilde
  Z_t^n\xRightarrow{n\to\infty} \delta_{\mathcal L(\widetilde R_t)}$.
\end{lemma}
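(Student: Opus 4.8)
The plan is to prove this by the first‑ and second‑moment method for the random probability measures $\widetilde Z^n_t$. Since the limit $\delta_{\mathcal L(R_t)}\in\mathcal P(\mathcal P(E))$ is a point mass, the assertion $\widetilde Z^n_t\xRightarrow{n\to\infty}\delta_{\mathcal L(R_t)}$ is equivalent to $\widetilde Z^n_t\to\mathcal L(R_t)$ in probability in $\mathcal P(E)$. Fixing a countable family $\varphi_1,\varphi_2,\ldots\in\mathcal C_b(E)$ with $\|\varphi_j\|\le1$ that is convergence determining and metrizes the weak topology on $\mathcal P(E)$, it suffices to show $\langle\widetilde Z^n_t,\varphi\rangle\to\mathbf E[\varphi(R_t)]$ in probability for each fixed $\varphi\in\mathcal C_b(E)$. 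As $c:=\mathbf E[\varphi(R_t)]$ is deterministic, I will obtain this $L^2$‑convergence by checking $\mathbf E[\langle\widetilde Z^n_t,\varphi\rangle]\to c$ and $\mathbf E[\langle\widetilde Z^n_t,\varphi\rangle^2]\to c^2$.

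The first moment is immediate from \eqref{eq:111}: $\mathbf E[\langle\widetilde Z^n_t,\varphi\rangle]=\mathbf E[\varphi(\widetilde R^n_t)]$, and since $\widetilde{\mathcal R}^n\xRightarrow{n\to\infty}\mathcal R$ (recalled above from \cite{ethier}, Corollary~4.8.9) while a Feller process is continuous in probability, so that the fixed time $t$ is a.s.\ a continuity point of its paths, we get $\widetilde R^n_t\xRightarrow{n\to\infty}R_t$ and hence $\mathbf E[\varphi(\widetilde R^n_t)]\to c$.

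The second moment is the heart of the matter. Put $N:=[nt]$ and let $\Sigma^n_1,\Sigma^n_2$ be two independent vertices chosen uniformly from $\mathcal T_N$, independent of $\mathcal X^n$; then $\mathbf E[\langle\widetilde Z^n_t,\varphi\rangle^2]=\mathbf E[\varphi(X^n_{\Sigma^n_1})\varphi(X^n_{\Sigma^n_2})]$. Conditioning on the depth $G^n:=|\Sigma^n_1\wedge\Sigma^n_2|$ of the most recent common ancestor $\rho^n:=\Sigma^n_1\wedge\Sigma^n_2$, which is geometric with $\mathbf P(G^n=m)=2^{-(m+1)}$ for $m<N$ and $\mathbf P(G^n=N)=2^{-N}$, on $\{G^n=m<N\}$ exactly one of the two leaves lies below $\rho^n0$ and the other below $\rho^n1$, each being uniform in its subtree. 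By the defining conditional‑independence property of a tree‑indexed Markov chain, conditionally on $X^n_{\rho^n0},X^n_{\rho^n1}$ the two leaf‑values are then independent, with $X^n_{\Sigma^n_i}$ distributed as $R^n_{N-m-1}$ started from the corresponding child state; consequently, with $g^n_k(z):=\mathbf E[\varphi(R^n_k)\mid R^n_0=z]$ (so $\|g^n_k\|\le\|\varphi\|$),
\[
  \mathbf E[\langle\widetilde Z^n_t,\varphi\rangle^2]
  =\sum_{m=0}^{N-1}2^{-(m+1)}\,\mathbf E\big[g^n_{N-m-1}(X^n_{\rho^n0})\,g^n_{N-m-1}(X^n_{\rho^n1})\,\big|\,G^n=m\big]+O(2^{-N}).
\]
Fix $M\in\mathbb N$: the terms with $m\ge M$ contribute at most $\|\varphi\|^2\,2^{-M}$ in total, uniformly in $n$. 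For a fixed $m<M$ two inputs finish the job. First, Lemma~\ref{l1}(1), applied to the finitely many vertices in generations $\le M+1$, gives $\max_{\sigma\in\mathcal T_{m+1}}r(X^n_\sigma,x)\to0$ in probability, hence (recall $R_0=x$ since $\nu=\delta_x$, and $\mathbf P(G^n=m)$ is bounded away from $0$) conditionally on $\{G^n=m\}$ one has $(X^n_{\rho^n0},X^n_{\rho^n1})\to(x,x)$ in probability. Second, the discrete‑time Trotter--Kurtz approximation theorem (\cite{ethier}, Section~1.6), applied to the one‑step transition operator $T_n$ of $\mathcal R^n$ — for which $n(T_n-I)=G_{\widetilde{\mathcal R}^n}$ — together with the hypotheses \eqref{eq:T11}--\eqref{eq:T12} and the strong continuity of the semigroup $(S_t)_{t\ge0}$ generated by $G_{\mathcal R}$, yields $\sup_z|g^n_{k_n}(z)-(S_t\varphi)(z)|\to0$ whenever $k_n/n\to t$, in particular for $k_n=N-m-1$. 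Since $S_t\varphi\in\mathcal C_b(E)$, combining the two inputs shows $\mathbf E[g^n_{N-m-1}(X^n_{\rho^n0})\,g^n_{N-m-1}(X^n_{\rho^n1})\mid G^n=m]\to (S_t\varphi)(x)^2=c^2$. Summing over $m<M$ and letting first $n\to\infty$ and then $M\to\infty$ gives $\mathbf E[\langle\widetilde Z^n_t,\varphi\rangle^2]\to c^2$.

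Combining the two moment estimates yields $\mathbf{Var}[\langle\widetilde Z^n_t,\varphi\rangle]\to0$, so $\langle\widetilde Z^n_t,\varphi\rangle\to c$ in $L^2$ and hence in probability; running this over the family $(\varphi_j)_{j\ge1}$ gives $\widetilde Z^n_t\to\mathcal L(R_t)$ in probability in $\mathcal P(E)$, which is the claim. The step I expect to be the main obstacle is the second‑moment computation: one must (i) convert the branching structure of $\mathcal T$ into the statement that, conditionally, $(X^n_{\Sigma^n_1},X^n_{\Sigma^n_2})$ is a pair of independent copies evolved over $\approx N$ steps from a most recent common ancestor lying only $O(1)$ generations below the root, and (ii) upgrade the generator convergence \eqref{eq:T11}--\eqref{eq:T12} to the uniform‑in‑$z$ semigroup convergence needed in order to substitute the random, only convergent‑in‑probability initial states $X^n_{\rho^n0},X^n_{\rho^n1}$; the accompanying bookkeeping of the conditioning on $G^n$ and the tail in $m$ is then routine.
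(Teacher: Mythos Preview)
Your proposal is correct and follows essentially the same strategy as the paper's proof: both reduce to showing $\mathbf{Var}[\langle\widetilde Z^n_t,\varphi\rangle]\to 0$ via the representation $\mathbf E[\langle\widetilde Z^n_t,\varphi\rangle^2]=\mathbf E[\varphi(X^n_{\Sigma^n_1})\varphi(X^n_{\Sigma^n_2})]$, exploit the conditional independence of the two leaf values given the children of their most recent common ancestor, and then combine Lemma~\ref{l1} (the MRCA is close to the root, so its children converge to $x$) with the uniform semigroup convergence from \cite{ethier}, Theorem~1.6.1. The only organizational difference is that you condition explicitly on the depth $G^n=m$ and truncate at a fixed level $M$ to control the tail, whereas the paper writes the error as a single remainder $\varepsilon^n_t$ and bounds it via \eqref{eq:600}; these are two packagings of the same computation.
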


\begin{proof}
  Note that the assertion holds once we show that 
  \begin{align}
    \label{eq:Z1}
    \langle Z_t^n, \varphi\rangle \xRightarrow{n\to\infty} \mathbf
    E[\varphi(\widetilde R_t)]
  \end{align}
  for all $\varphi \in \mathcal C_b(E)$. (Indeed, the family $(\langle
  Z_t^n, \varphi\rangle)_{n=1,2,...}$ is tight by the boundedness of
  $\varphi$ and any subsequent limit point is deterministic by
  Lemma~\ref{l:detMM}.) For this, we already know from~\eqref{eq:111}
  that $\mathbf E[\langle Z_t^n, \varphi\rangle] = \mathbf
  E[\varphi(\widetilde R_t^n)] \xrightarrow{n\to\infty} \mathbf
  E[\varphi(\widetilde R_t)]$. Further we will show that
  \begin{align}
    \label{eq:112b}
    \mathbf{Var}[\langle Z_t^n, \varphi\rangle]
    \xrightarrow{n\to\infty} 0
  \end{align}
  which then implies \eqref{eq:Z1}. For this, consider two randomly
  picked vertices $\Sigma_1, \Sigma_2 \in \mathcal T_{[nt]}$ with
  $\Sigma_1\neq\Sigma_2$. Then, without loss of generality we assume
  that $\pi_{|\Sigma_1\wedge\Sigma_2|+1} X_{\Sigma_1} =
  X_{(\Sigma_1\wedge\Sigma_2)0}$ and $\pi_{|\Sigma_1\wedge\Sigma_2|+1}
  X_{\Sigma_2} = X_{(\Sigma_1\wedge\Sigma_2)1}$ such that
  \begin{align*}
    \mathbf E[\langle \widetilde Z_t^n, & \varphi\rangle^2] =
    \frac{1}{2^{2[nt]}} \sum_{\sigma_1, \sigma_2 \in \mathcal
      T_{[nt]}} \mathbf E[\varphi(X^n_{\sigma_1})
    \varphi(X^n_{\sigma_2})] \\ & = \mathbf E[\varphi(X^n_{\Sigma_1})
    \varphi(X^n_{\Sigma_2})] + \frac{1}{2^{[nt]}} \big(\mathbf
    E[\varphi^2(X_{\Sigma_1})-\varphi(X_{\Sigma_1})\varphi(X_{\Sigma_2})]\big)\\
    & = \mathbf E\big[\mathbf E[\varphi(X^n_{\Sigma_1})
    \varphi(X^n_{\Sigma_2})|X^n_{(\Sigma^n_1\wedge \Sigma_2^n)0},
    X^n_{(\Sigma^n_1\wedge \Sigma_2^n)1}]\big] +
    \frac{1}{2^{[nt]}}\big(\mathbf
    E[\varphi^2(X_{\Sigma_1})-\varphi(X_{\Sigma_1})\varphi(X_{\Sigma_2})]\big)
    \\ & = \mathbf E\big[\mathbf E[\varphi(X^n_{\Sigma_1})
    |X^n_{\pi_{|\Sigma^n_1\wedge \Sigma_2^n|+1}\Sigma_1}]\cdot \mathbf
    E[\varphi(X^n_{\Sigma_2}) |X^n_{\pi_{|\Sigma^n_1\wedge
        \Sigma_2^n|+1}\Sigma_2}]\big] \\ & \qquad \qquad \qquad \qquad
    \qquad \qquad \qquad \qquad \qquad \qquad + \frac{1}{2^{[nt]}}
    \big(\mathbf
    E[\varphi^2(X_{\Sigma_1})-\varphi(X_{\Sigma_1})\varphi(X_{\Sigma_2})]\big)\\
    & = \mathbf E\big[ \mathbf E[\varphi(R^n_{[nt] -
      |\Sigma_1\wedge\Sigma_2|-1})| R_0^n =
    X^n_{\pi_{|\Sigma^n_1\wedge \Sigma_2^n|+1}\Sigma_1}] \\ & \qquad
    \cdot \mathbf E[\varphi(R^n_{[nt] - |\Sigma_1\wedge\Sigma_2|-1})|
    R_0^n = X^n_{\pi_{|\Sigma^n_1\wedge \Sigma_2^n|+1}\Sigma_2}]\big]
    + \frac{1}{2^{[nt]}} \big(\mathbf
    E[\varphi^2(X_{\Sigma_1})-\varphi(X_{\Sigma_1})\varphi(X_{\Sigma_2})]\big)\\
    & = \mathbf E[\varphi(\widetilde R_t^n)]^2 + \varepsilon_t^n =
    \mathbf E[\langle \widetilde Z_t^n, \varphi\rangle]^2 +
    \varepsilon_t^n
  \end{align*}
  for 
  \begin{equation}
    \label{eq:113}
    \begin{aligned}
      \varepsilon_t^n & := \mathbf E\big[ \mathbf E[\varphi(R^n_{[nt]
        - |\Sigma_1\wedge\Sigma_2|-1}) - \mathbf
      E[\varphi(R_{[nt]}^n)]| R_0^n = X^n_{\pi_{|\Sigma^n_1\wedge
          \Sigma_2^n|+1}\Sigma_1}] \\ & \quad \qquad \qquad \qquad
      \cdot \mathbf E[\varphi(R^n_{[nt] -
        |\Sigma_1\wedge\Sigma_2|-1})- \mathbf E[\varphi(R_{[nt]}^n)]|
      R_0^n = X^n_{\pi_{|\Sigma^n_1\wedge \Sigma_2^n|+1}\Sigma_2}]\big] \\
      & \quad + \mathbf E[\varphi(R_{[nt]}^n)]\cdot \mathbf
      E[\varphi(R^n_{[nt] - |\Sigma_1\wedge\Sigma_2|-1})- \mathbf
      E[\varphi(R_{[nt]}^n)]| R_0^n = X^n_{\pi_{|\Sigma^n_1\wedge
          \Sigma_2^n|+1}\Sigma_1}]\big]\\ & \quad + \mathbf
      E[\varphi(R_{[nt]}^n)]\cdot \mathbf E[\varphi(R^n_{[nt] -
        |\Sigma_1\wedge\Sigma_2|-1})- \mathbf E[\varphi(R_{[nt]}^n)]|
      R_0^n = X^n_{\pi_{|\Sigma^n_1\wedge \Sigma_2^n|+1}\Sigma_2}]\big] \\
      & \qquad \qquad \qquad \qquad \qquad \qquad \qquad \qquad \qquad
      + \frac{1}{2^{[nt]}} \big(\mathbf
      E[\varphi^2(X_{\Sigma_1})-\varphi(X_{\Sigma_1})\varphi(X_{\Sigma_2})]\big)
      \\ & = \mathbf{COV}\big[\mathbf E[\varphi(R^n_{[nt] -
        |\Sigma_1\wedge\Sigma_2|-1})| R_0^n =
      X^n_{\pi_{|\Sigma^n_1\wedge \Sigma_2^n|+1}\Sigma_1}], \\ &
      \qquad \qquad \qquad \mathbf E[\varphi(R^n_{[nt] -
        |\Sigma_1\wedge\Sigma_2|-1})| R_0^n =
      X^n_{\pi_{|\Sigma^n_1\wedge \Sigma_2^n|+1}\Sigma_2}]] \\ & +
      2\cdot\mathbf E[\varphi(R_{[nt]}^n)]\cdot \mathbf
      E[\varphi(R^n_{[nt] - |\Sigma_1\wedge\Sigma_2|-1})- \mathbf
      E[\varphi(R_{[nt]}^n)]| R_0^n = X^n_{\pi_{|\Sigma^n_1\wedge
          \Sigma_2^n|+1}\Sigma_2}]\big] \\
      & \qquad \qquad \qquad \qquad \qquad \qquad \qquad \qquad \qquad
      + \frac{1}{2^{[nt]}} \big(\mathbf
      E[\varphi^2(X_{\Sigma_1})-\varphi(X_{\Sigma_1})\varphi(X_{\Sigma_2})]\big).
    \end{aligned}
  \end{equation}
  Hence, we must show $\varepsilon_t^n \xrightarrow{n\to\infty}0$ for
  \eqref{eq:112b}, which is implied by the boundedness of $\varphi$
  (showing convergence to~0 of the last term in the last line
  of~\eqref{eq:113}), by the Cauchy-Schwartz inequality and
  \begin{align}
    \label{eq:600}
    \mathbf E[\varphi(R^n_{[nt] - |\Sigma_1\wedge\Sigma_2|-1})|
    R_0^n = X^n_{\pi_{|\Sigma^n_1\wedge \Sigma_2^n|+1}\Sigma_1}]
    \xrightarrow{n\to\infty} \mathbf E[\varphi(R_{t})| R_0 = x]
  \end{align}
  in probability. We already know from Lemma~\ref{l1} that
  $X^n_{\pi_{|\Sigma^n_1\wedge \Sigma_2^n|+1}\Sigma_1}
  \xrightarrow{n\to\infty} x$ in probability, such that, since
  $\mathcal R$ has \cadlag\ paths, convergence of semigroups and
  \cite{ethier}, Theorem 1.6.1 (see also Remark 4.8.8) and the strong
  continuity of the semigroup for $\mathcal R$,
  \begin{align*}
    |\mathbf E[\varphi(& R^n_{[nt] - |\Sigma_1\wedge\Sigma_2|-1})|
    R_0^n = X^n_{\pi_{|\Sigma^n_1\wedge \Sigma_2^n|+1}\Sigma_1}]-
    \mathbf E[\varphi(R_{t})| R_0 = x]| \\ & \leq |\mathbf
    E[\varphi(R^n_{[nt] - |\Sigma_1\wedge\Sigma_2|-1})| R_0^n =
    X^n_{\pi_{|\Sigma^n_1\wedge \Sigma_2^n|+1}\Sigma_1}] - \mathbf
    E[\varphi(R^n_{[nt]}) | R_0^n = X^n_{\pi_{|\Sigma^n_1\wedge
        \Sigma_2^n|+1}\Sigma_1}]| \\ & + |\mathbf E[\varphi(\widetilde
    R^n_{t}) | \widetilde R_0^n = X^n_{\pi_{|\Sigma^n_1\wedge
        \Sigma_2^n|+1}\Sigma_1}]| - \mathbf E[\varphi(R_{t}) | R_0 =
    X^n_{\pi_{|\Sigma^n_1\wedge \Sigma_2^n|+1}\Sigma_1}]| \\ & +
    |\mathbf E[\varphi(R_{t}) | R_0 = X^n_{\pi_{|\Sigma^n_1\wedge
        \Sigma_2^n|+1}\Sigma_1}] - \mathbf E[\varphi(R_{t})| R_0 = x]|
    \\ & \xrightarrow{n\to\infty} 0
  \end{align*}
  in probability, which shows \eqref{eq:600}. This completes the proof.
\end{proof}

\noindent
Now, we come to the proof of the compact containment condition for
$(\widetilde{\mathcal Z}^n)_{n=1,2,...}$.

\begin{lemma}[Compact containment condition for $\widetilde{\mathcal
    Z}^n$\label{l:CompCont}]
  If $(\widetilde{\mathcal R}^n)_{n=1,2,...}$ satisfies the compact
  containment condition (in $E$), then $(\widetilde{\mathcal
    Z}^n)_{n=1,2,...}$ satisfies the compact containment condition (in
  $\mathcal P(E)$) as well.
\end{lemma}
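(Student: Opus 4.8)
The plan is to transfer the compact containment from $E$ to $\mathcal P(E)$ using the fact that a subset of $\mathcal P(E)$ is relatively compact iff it is tight (Prokhorov), together with a first-moment (Markov inequality) estimate that controls, uniformly in $n$, the expected mass that $\widetilde Z_t^n$ places outside a fixed compact subset of $E$. Concretely, fix $\varepsilon>0$ and $T\ge 0$. The idea is to produce a compact set $\mathcal K_{\varepsilon,T}\subseteq\mathcal P(E)$ of the form
$$
\mathcal K_{\varepsilon,T} := \bigcap_{j=1}^\infty \Big\{ z\in\mathcal P(E) : z(L_j^c) \leq 2^{-j} \Big\},
$$
where each $L_j\subseteq E$ is a suitably chosen compact set; $\mathcal K_{\varepsilon,T}$ is closed (each defining condition is closed in the weak topology since $L_j$ is closed, hence $z\mapsto z(L_j^c)$ is upper semicontinuous), and it is tight by construction, so it is compact by Prokhorov's theorem.

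First I would use the hypothesis: the compact containment condition for $(\widetilde{\mathcal R}^n)_{n=1,2,\dots}$ in $E$ gives, for each $j$, a compact $K_j=K_{\varepsilon_j,T}\subseteq E$ with
$$
\sup_{n} \mathbf P\big(\widetilde R_t^n \in K_j^c \text{ for some } 0\le t\le T\big) < \varepsilon_j,
$$
and I would choose $\varepsilon_j := \varepsilon\, 4^{-j}$ (the precise decay rate is adjustable; any summable-times-small sequence works). The key computation is then, for any Borel $B\subseteq E$ and any $0\le t\le T$,
$$
\mathbf E\big[\widetilde Z_t^n(B)\big] = \mathbf E\big[\langle \widetilde Z_t^n, \mathbbm 1_B\rangle\big] = \mathbf P\big(\widetilde R_t^n \in B\big),
$$
which is exactly~\eqref{eq:111} applied to $\varphi=\mathbbm 1_B$ (approximating by $\mathcal C_b$ functions if one insists on staying within the stated form of~\eqref{eq:111}, or noting the identity holds for all bounded measurable $\varphi$ by the same computation). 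Hence for a fixed compact $L_j$,
$$
\mathbf E\Big[\sup_{0\le t\le T} \widetilde Z_t^n(L_j^c)\Big] \le \sum_{i=0}^{[nT]} \mathbf E\big[\widetilde Z_{i/n}^n(L_j^c)\big] \cdot(\text{bad idea})
$$
— so instead I would \emph{not} take a sup inside the expectation but handle the time-uniformity at the level of the underlying chain. Here is the clean route: set $L_j := K_j$ and define the event $A_j^n := \{\widetilde R_t^n\in K_j \text{ for all } 0\le t\le T\}$, so $\mathbf P((A_j^n)^c)<\varepsilon_j$. On $A_j^n$ we do \emph{not} directly control $\widetilde Z_t^n(K_j^c)$, because $\widetilde Z_t^n$ averages over \emph{all} leaves of $\mathcal T_{[nt]}$, not just the one visited by the random walk; so the correct move is to bound $\mathbf E[\widetilde Z_t^n(K_j^c)] = \mathbf P(\widetilde R_t^n\in K_j^c) \le \mathbf P((A_j^n)^c) < \varepsilon_j$ for \emph{each fixed} $t$, then apply Markov's inequality and a union bound over a countable dense-enough set of times together with a monotonicity/right-continuity argument. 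Since $t\mapsto\widetilde Z_t^n$ is piecewise constant with jumps only at $t\in\frac1n\mathbb Z_{\ge0}$, it suffices to control the finitely many values $t\in\{0,\frac1n,\frac2n,\dots,\frac{[nT]}{n}\}$: by Markov,
$$
\mathbf P\Big(\exists\, t\in\tfrac1n\{0,\dots,[nT]\}:\ \widetilde Z_t^n(K_j^c) > 2^{-j}\Big) \le 2^{j}\sum_{i=0}^{[nT]} \mathbf E\big[\widetilde Z_{i/n}^n(K_j^c)\big] \le 2^{j}\, ([nT]+1)\,\varepsilon_j,
$$
which is still not summable in $j$ uniformly in $n$ because of the $[nT]$ factor. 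To kill the $[nT]$ I would instead strengthen the choice of $K_j$ using the \emph{uniform-in-time} form of the hypothesis directly on $\widetilde Z^n$: note $\mathbf E[\sum_{i=0}^{[nT]} \widetilde Z_{i/n}^n(K^c)]$ is not what the compact containment of $\widetilde{\mathcal R}^n$ bounds; what it bounds is $\mathbf P(\sup_i \widetilde R^n_{i/n}\in K^c)$. The resolution is to pick, for the $j$-th set, $\varepsilon_j$ depending on $T$ but to absorb $[nT]$ by choosing instead $K_j$ so that $\sup_n\mathbf P(\exists t\le T: \widetilde R_t^n\in K_j^c)$ is exponentially small; however the CCC only gives an \emph{arbitrary} $\varepsilon_j$, which we may take as small as we like for \emph{each} $j$ but not varying with $n$ — and $[nT]$ does vary with $n$.

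Therefore the \textbf{main obstacle}, and the step that needs real care, is exactly this time-uniformity: converting the pointwise-in-$t$ bound $\mathbf E[\widetilde Z_t^n(K^c)]=\mathbf P(\widetilde R_t^n\in K^c)$ into a bound uniform over $t\in[0,T]$ that does not blow up with $n$. The right fix is to work with the $E$-valued process differently: apply the compact containment hypothesis to get $K$ with $\mathbf P(\exists t\le T:\widetilde R^n_t\in K^c)<\varepsilon'$, and observe that by exchangeability of the leaves and~\eqref{eq:111}, $\mathbf E[\sup_{t\le T}\widetilde Z^n_t(K^c)]$ can be controlled because $\widetilde R^n$ started from a \emph{uniformly random leaf at generation $[nT]$, read backwards,} dominates — more precisely, I would use that for the random-walk representation $\widetilde R^n_t = X^n_{\Sigma_{[nt]}}$, and that $\widetilde Z^n_t(K^c) = \mathbf P(X^n_{\Sigma_{[nt]}}\in K^c\mid \mathcal X^n)$, so that $\sup_{t\le T}\widetilde Z^n_t(K^c) \le \mathbf P(\exists\, t\le T: X^n_{\Sigma_{[nt]}}\in K^c\mid\mathcal X^n)$, and taking expectations, $\mathbf E[\sup_{t\le T}\widetilde Z^n_t(K^c)] \le \mathbf P(\exists\, t\le T:\widetilde R^n_t\in K^c)<\varepsilon'$. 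With this bound in hand, set $\varepsilon_j := \varepsilon\,2^{-2j}$, obtain $K_j$ with $\mathbf E[\sup_{t\le T}\widetilde Z^n_t(K_j^c)]<\varepsilon\,2^{-2j}$ uniformly in $n$, apply Markov to get $\sup_n\mathbf P(\sup_{t\le T}\widetilde Z^n_t(K_j^c) > 2^{-j}) < \varepsilon\,2^{-j}$, and conclude by a union bound that
$$
\sup_{n}\ \mathbf P\Big(\widetilde Z_t^n\notin\mathcal K_{\varepsilon,T}\ \text{for some}\ 0\le t\le T\Big) \le \sum_{j=1}^\infty \varepsilon\,2^{-j} = \varepsilon,
$$
with $\mathcal K_{\varepsilon,T}=\bigcap_j\{z:z(K_j^c)\le 2^{-j}\}$ compact in $\mathcal P(E)$ by Prokhorov. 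This completes the proof.
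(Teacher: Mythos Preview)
Your final argument is correct and rests on the same idea as the paper's proof: both exploit the identity $\widetilde Z_t^n(K^c)=\mathbf P(\widetilde R_t^n\in K^c\mid\mathcal X^n)$ and then bound expectations by the time-uniform probability $\mathbf P(\exists\,t\le T:\widetilde R_t^n\in K^c)$ supplied by the hypothesis. The paper argues by contradiction and evaluates at a random exit time $\tau\le T$ (measurable with respect to $\mathcal X^n$), obtaining $\mathbf E[\widetilde Z_\tau^n(K^c)]=\mathbf P(\widetilde R_\tau^n\in K^c)\le\mathbf P(\exists\,t\le T:\widetilde R_t^n\in K^c)$, and then derives a contradiction via Markov's inequality. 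You instead bound $\mathbf E[\sup_{t\le T}\widetilde Z_t^n(K^c)]$ directly by the same right-hand side, using $\sup_t\mathbf P(A_t\mid\mathcal X^n)\le\mathbf P(\bigcup_t A_t\mid\mathcal X^n)$, and then apply Markov plus a union bound over $j$. Your route is a little cleaner---direct rather than by contradiction, no random stopping time---and it makes the conditional-probability interpretation of the empirical measure explicit, which is only implicit in the paper's computation.

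Two small points. First, the semicontinuity direction: for open $K_j^c$, Portmanteau gives that $z\mapsto z(K_j^c)$ is \emph{lower} semicontinuous, and it is the sublevel sets of an l.s.c.\ function that are closed; so your conclusion that $\{z:z(K_j^c)\le 2^{-j}\}$ is closed is correct, but the stated reason (``upper semicontinuous'') is the wrong way round. Second, the long string of abandoned attempts before the correct argument should be pruned; the proof really begins at the observation $\widetilde Z_t^n(K^c)=\mathbf P(X^n_{\Sigma_{[nt]}}\in K^c\mid\mathcal X^n)$, and everything before that is scaffolding that obscures an otherwise short and clean argument.
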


\begin{proof}
  The proof is by contradiction. Assume that $(\widetilde{\mathcal
    R}^n)_{n=1,2,...}$ satisfies the compact containment condition,
  but the compact containment condition for $(\widetilde{\mathcal
    Z}^n)_{n=1,2,...}$ does not hold. Let $\varepsilon>0$ and $T\in
  \mathbb R_+$ be such that
  \begin{align} \label{eq:1} \sup_{n=1,2,...} \mathbf P(\widetilde
    Z_t^n \notin L \text{ for some }0\leq t\leq T) > \varepsilon
  \end{align}
  for all $L\subseteq \mathcal P(E)$ compact. (Such an $\varepsilon$
  exists since the compact containment condition for
  $\widetilde{\mathcal Z}^n$ does not hold.) For all $\delta>0$, let
  $K_\delta\subseteq E$ be compact and such that
  \begin{align*}
    \sup_{n=1,2,...} \mathbf P(\widetilde R_t^n \notin K_\delta \text{ for some
    }0\leq t\leq T) < \delta.
  \end{align*}
  For $\delta>0$ and $\varepsilon>0$ as above set
  $$ L_\delta := \{\mu\in\mathcal P(E): \mu(K^c_{\delta^2}) < \delta\}, \qquad
  L := \bigcap_{n=1}^\infty L_{\varepsilon 2^{-n}}.$$ Then, the
  closure of $L$ is a compact subset of $\mathcal P(E)$ by
  Prohorov's Theorem and by~\eqref{eq:1} there exist random times
  $\tau_k$, bounded by $T$ such that
  \begin{align} \label{eq:2} \sup_{k=1,2,...} \mathbf P(\widetilde
    Z_{\tau_k}^n \notin L) > \varepsilon.
  \end{align}
  Clearly, there must be $\ell\in\mathbb N$ such that
  \begin{align} \label{eq:3} \sup_{k=1,2,...} \mathbf P(\widetilde
    Z_{\tau_k}^n \notin L_{\varepsilon 2^{-\ell}}) > \varepsilon
    2^{-\ell}
  \end{align}
  (since otherwise \eqref{eq:2} cannot hold). Now we have by Markov's
  inequality that
  \begin{align*}
    \varepsilon 2^{-\ell} & < \sup_{k=1,2,...} \mathbf P(\widetilde
    Z_{\tau_k}^n(K^c_{\varepsilon^2 4^{-\ell}}) > \varepsilon
    2^{-\ell}) \leq \sup_{k=1,2,...} \tfrac{1}{\varepsilon 2^{-\ell}}
    \mathbf E[\langle \widetilde Z_{\tau_k}^n,
    1_{K^c_{\varepsilon^2 4^{-\ell}}}\rangle] \\
    & = \sup_{k=1,2,...} \tfrac{1}{\varepsilon 2^{-\ell}} \mathbf
    E[1_{K^c_{\varepsilon^2 4^{-\ell}}}(\widetilde R_{\tau_k}^n)] =
    \sup_{k=1,2,...}  \tfrac{1}{\varepsilon 2^{-\ell}} \mathbf
    P(\widetilde R_{\tau_k}^n \notin K_{\varepsilon^2 4^{-\ell}}) \\ &
    \leq \sup_{k=1,2,...}  \tfrac{1}{\varepsilon 2^{-\ell}} \mathbf
    P(\widetilde R_t^n \notin K_{\varepsilon^2 4^{-\ell}} \text{ for
      some }0\leq t\leq T) \leq \varepsilon 2^{-\ell},
  \end{align*}
  a contradiction.
\end{proof}

\begin{lemma}[Martingale convergence\label{l3}]
  Consider the same situation as in Theorem~\ref{T1} with
  $\nu=\delta_x$ for some $x\in E$. Let $\varphi \in \Pi$ and
  $\varphi_n \in \mathcal B(E)$ such that $||\varphi_n - \varphi||
  \xrightarrow{n\to\infty} 0$. For $f_n(z) := \langle z,
  \varphi_n\rangle$, consider the mean-zero martingale $\mathcal
  M^{n,\varphi_n} = (M^{n,\varphi_n}_t)_{t\geq 0}$, given by
  \begin{align*}
    M^{n,\varphi_n}_t &:= f_n(Z^n_{[nt]})-f_n(Z^n_0)-\sum_{k=1}^{[nt]}
    \mathbf E [f_n(Z^n_k) - f_n(Z^n_{k-1})|Z^n_{k-1}].
  \end{align*}
  Then, $\mathcal M^{n,\varphi_n}\xRightarrow{n\to\infty}0$ and
  $f_n(\widetilde{\mathcal Z}^n) \xRightarrow{n\to\infty} \mathbf
  E[\varphi(\mathcal R)]$. In particular, $f_n(\widetilde{\mathcal
    Z}^n))_{n=1,2,...}$ is tight.
\end{lemma}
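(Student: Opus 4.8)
The plan is to work with the Doob decomposition of the discrete-time process $k\mapsto f_n(Z^n_k)$, observed at $k=[nt]$,
$$f_n(\widetilde Z^n_t)=f_n(Z^n_0)+A^n_t+M^{n,\varphi_n}_t,\qquad A^n_t:=\sum_{k=1}^{[nt]}\mathbf E[f_n(Z^n_k)-f_n(Z^n_{k-1})\mid Z^n_{k-1}],$$
to show that $M^{n,\varphi_n}$ is asymptotically negligible while $A^n$ is asymptotically Lipschitz, and then to read off the limit from Lemma~\ref{l:convZ}. First one identifies the compensator. Write $P_n$ for the one-step transition operator of $\mathcal R^n$, so that $n(P_n-I)\psi=G_{\widetilde{\mathcal R}^n}\psi$ for bounded $\psi$, and set $\mathcal F_{k-1}:=\sigma(X^n_\sigma:|\sigma|\le k-1)$. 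By the tree Markov property (cf.\ \eqref{eq:112}), $\mathbf E[f_n(Z^n_k)\mid\mathcal F_{k-1}]=\langle Z^n_{k-1},P_n\varphi_n\rangle$, which being $\sigma(Z^n_{k-1})$-measurable equals $\mathbf E[f_n(Z^n_k)\mid Z^n_{k-1}]$ as well; hence $A^n_t=\tfrac1n\sum_{k=1}^{[nt]}\langle Z^n_{k-1},G_{\widetilde{\mathcal R}^n}\varphi_n\rangle$ and the martingale increment is $\Delta M^{n,\varphi_n}_k=f_n(Z^n_k)-\mathbf E[f_n(Z^n_k)\mid\mathcal F_{k-1}]$.

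To get $\mathcal M^{n,\varphi_n}\Rightarrow0$ I would estimate the second moment. By orthogonality of martingale increments,
$$\mathbf E[(M^{n,\varphi_n}_t)^2]=\sum_{k=1}^{[nt]}\mathbf E\big[\mathbf{Var}(f_n(Z^n_k)\mid\mathcal F_{k-1})\big].$$
Given $\mathcal F_{k-1}$, the branch increments $\varphi_n(X^n_{\sigma0})+\varphi_n(X^n_{\sigma1})$, $\sigma\in\mathcal T_{k-1}$, are independent, so $\mathbf{Var}(f_n(Z^n_k)\mid\mathcal F_{k-1})=2^{-2k}\sum_{\sigma\in\mathcal T_{k-1}}v_n(X^n_\sigma)$ with $v_n(x):=\mathbf{Var}(\varphi_n(X^n_{\sigma0})+\varphi_n(X^n_{\sigma1})\mid X^n_\sigma=x)$; averaging over the independent random walk yields $2^{-2k}\sum_{\sigma\in\mathcal T_{k-1}}\mathbf E[v_n(X^n_\sigma)]=2^{-(k+1)}\mathbf E[v_n(R^n_{k-1})]$, so $\mathbf E[(M^{n,\varphi_n}_t)^2]=\sum_{k=1}^{[nt]}2^{-(k+1)}\mathbf E[v_n(R^n_{k-1})]$. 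Each summand is bounded by the summable $2^{-(k+1)}\cdot4\sup_m\|\varphi_m\|^2$; and for each fixed $k$, using $v_n(x)\le2\sum_{i=0,1}\mathbf E[(\varphi_n(X^n_{\sigma i})-\varphi_n(x))^2\mid X^n_\sigma=x]$ together with the convergences $R^n_{k-1},X^n_{\Sigma_{k-1}0},X^n_{\Sigma_{k-1}1}\to x$ in probability (Lemma~\ref{l1}(1)), $\|\varphi_n-\varphi\|\to0$ and continuity of $\varphi$, bounded convergence gives $\mathbf E[v_n(R^n_{k-1})]\to0$. Dominated convergence for the series then gives $\mathbf E[(M^{n,\varphi_n}_t)^2]\to0$ for all $t$; since $t\mapsto M^{n,\varphi_n}_t$ is constant on each $[j/n,(j+1)/n)$, Doob's $L^2$-maximal inequality upgrades this to $\mathbf E[\sup_{s\le t}(M^{n,\varphi_n}_s)^2]\to0$, hence $\mathcal M^{n,\varphi_n}\Rightarrow0$.

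For $f_n(\widetilde{\mathcal Z}^n)$ the one-dimensional marginals converge: by Lemma~\ref{l:convZ}, $\widetilde Z^n_t\Rightarrow\mathcal L(R_t)$ in $\mathcal P(E)$, so continuity of $\mu\mapsto\langle\mu,\varphi\rangle$ and $\|\varphi_n-\varphi\|\to0$ give $f_n(\widetilde Z^n_t)=\langle\widetilde Z^n_t,\varphi_n\rangle\Rightarrow\mathbf E[\varphi(R_t)]$; since the limits are deterministic, all finite-dimensional distributions converge to those of $t\mapsto\mathbf E[\varphi(R_t)]$, which is continuous by strong continuity of the semigroup on $\mathcal D(G_{\mathcal R})\ni\varphi$. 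For tightness in $D_{\mathbb R}[0,\infty)$ I use the decomposition: $f_n(Z^n_0)=\varphi_n(X^n_0)\to\varphi(x)$ in probability; $|A^n_t-A^n_s|\le\tfrac{[nt]-[ns]}{n}\|G_{\widetilde{\mathcal R}^n}\varphi_n\|\le\|G_{\widetilde{\mathcal R}^n}\varphi_n\|(|t-s|+\tfrac1n)$ with $\sup_n\|G_{\widetilde{\mathcal R}^n}\varphi_n\|<\infty$ by \eqref{eq:T12} and boundedness of $G_{\mathcal R}\varphi$, so $(f_n(Z^n_0)+A^n)_n$ is equicontinuous up to an $O(1/n)$ error and hence tight; and $f_n(\widetilde{\mathcal Z}^n)$ differs from $f_n(Z^n_0)+A^n$ by $\mathcal M^{n,\varphi_n}$, which tends to $0$ uniformly on compacts, so $(f_n(\widetilde{\mathcal Z}^n))_n$ is tight as well. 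Tightness together with the finite-dimensional convergence to the deterministic continuous limit then gives $f_n(\widetilde{\mathcal Z}^n)\Rightarrow\mathbf E[\varphi(\mathcal R)]$, and in particular tightness of $(f_n(\widetilde{\mathcal Z}^n))_n$.

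The main obstacle is the second-moment bound for $\mathcal M^{n,\varphi_n}$: nothing in the hypotheses forces the one-step increments of $\mathcal R^n$ to have small variance, so $v_n$ is a priori only bounded. One must use simultaneously that (i) averaging the $2^{k-1}$ conditionally independent branch increments of generation $k-1$ produces the summable prefactor $2^{-(k+1)}$, and (ii) for each fixed generation $k$ the relevant states collapse to the deterministic starting point $x$ by Lemma~\ref{l1}, so that $\mathbf E[v_n(R^n_{k-1})]\to0$ generation by generation; combining the two through dominated convergence for the series is the crux.
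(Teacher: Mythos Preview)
Your proof is correct, and the route you take to show $\mathcal M^{n,\varphi_n}\Rightarrow 0$ is genuinely different from the paper's. The paper argues \emph{indirectly}: using Lemma~\ref{l:convZ} it shows that both the main term $\langle\widetilde Z^n_t,\varphi_n\rangle$ and the compensator $\int_0^{t-1/n} n\,\mathbf E[\varphi_n(\widetilde R^n_{s+1/n})-\varphi_n(\widetilde R^n_s)\mid\widetilde Z^n_s]\,ds$ converge separately to the deterministic limits $\mathbf E[\varphi(R_t)]$ and $\int_0^t\mathbf E[G_{\mathcal R}\varphi(R_s)]\,ds$, which cancel by Dynkin's formula; then Doob's $L^1$ inequality upgrades pointwise to uniform convergence. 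You instead compute the quadratic variation \emph{directly}: the conditional independence of the $2^{k-1}$ sibling pairs at level $k$ yields $\mathbf E[(M^{n,\varphi_n}_t)^2]=\sum_{k\ge 1}2^{-(k+1)}\mathbf E[v_n(R^n_{k-1})]$, and then Lemma~\ref{l1} together with dominated convergence in $k$ finishes. Your argument makes the tree structure do the work explicitly (the $2^{-k}$ averaging produces a summable series regardless of how large the one-step variance is), does not need Lemma~\ref{l:convZ} for this step, and gives an $L^2$ rather than merely $L^1$ bound. The paper's route, on the other hand, identifies the limiting drift along the way, so once Lemma~\ref{l:convZ} is available it is shorter and avoids the second-moment bookkeeping. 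Both then finish the same way: Lipschitz control of $A^n$ via $\sup_n\|G_{\widetilde{\mathcal R}^n}\varphi_n\|<\infty$, fdd convergence from Lemma~\ref{l:convZ}, and uniform convergence on compacts.
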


\begin{proof}
  We start by reformulating, using~\eqref{eq:112},
  \begin{equation}
    \label{eq:133}
    \begin{aligned}
      M^{n,\varphi_n}_t & = \langle Z^n_{[nt]},\varphi_n\rangle -
      \langle Z^n_0, \varphi_n\rangle -\sum_{k=1}^{[nt]}
      \mathbf E[\langle Z^n_k, \varphi_n\rangle -\langle Z^n_{k-1}, \varphi_n\rangle|Z^n_{k-1}]\\
      &= \mathbf E[\varphi_n(R^n_{[nt]})|Z^n_{[nt]}]-\mathbf
      E[\varphi_n(R^n_0)|Z^n_0] - \int_{0}^{t-1/n} n \cdot \mathbf
      E\big[ \mathbf
      E[\varphi_n(R^n_{[ns]+1})-\varphi_n(R^n_{[ns]})]|Z^n_{[ns]}\big]ds \\
      & = \mathbf E[\varphi_n(\widetilde R^n_{t})|\widetilde
      Z^n_{t}]-\mathbf E[\varphi_n(\widetilde R^n_0)|\widetilde
      Z^n_0]- \int_{0}^{t-1/n} n \cdot \mathbf E\big[
      \varphi_n(\widetilde R^n_{s+1/n})-\varphi_n(\widetilde
      R^n_{s})|\widetilde Z^n_{s}\big]ds.
    \end{aligned}
  \end{equation}
  We now show that $\mathcal
  M^{n,\varphi_n}\xRightarrow{n\to\infty}0$. From Lemma~\ref{l:convZ},
  we already know that $\widetilde Z_t^n \xRightarrow{n\to\infty}
  \mathcal L(R_t)$. We complement this by showing that (note that the
  right hand side is deterministic) for all $s\geq 0$
  \begin{align*}
    n \cdot \mathbf E\big[ \varphi_n(\widetilde
    R^n_{s+1/n})-\varphi_n(\widetilde R^n_{s})|\widetilde Z^n_{s}\big]
    \xRightarrow{n\to\infty} \mathbf E\big[ G_{{\mathcal R}}
    \varphi(R_s)\big].
  \end{align*}
  Indeed,
  \begin{equation}
    \label{eq:345}
    \begin{aligned}
      \mathbf E\Big[\Big| & n \cdot \mathbf E\big[
      \varphi_n(\widetilde R^n_{s+1/n})-\varphi_n(\widetilde
      R^n_{s})|\widetilde Z^n_{s}\big]- \mathbf E[G_{\mathcal R}
      \varphi(R_s)]\Big|\Big] \\ & \leq \mathbf E\Big[\Big| n \cdot
      \mathbf E\big[ \varphi_n(\widetilde R^n_{s+1/n}) -
      \varphi_n(\widetilde R^n_{s})|\widetilde Z^n_{s}\big] - \mathbf
      E\big[ G_{{\mathcal R}}\varphi(\widetilde R^n_{s})|\widetilde
      Z^n_{s}\big]\Big|\Big] \\ & \qquad + \mathbf E\Big[\Big|\mathbf
      E\big[G_{{\mathcal R}}\varphi(\widetilde R^n_{s})-\mathbf E\big[
      G_{{\mathcal R}}\varphi(\widetilde R^n_{s})\big]
      \big| \widetilde Z^n_{s}\big] \Big|\Big] \\
      & \qquad \qquad + \Big|\mathbf E\big[ G_{{\mathcal
          R}}\varphi(\widetilde R^n_{s})\big]- \mathbf E[G_{\mathcal
        R} \varphi(R_s)]\Big| \xrightarrow{n\to\infty} 0
    \end{aligned}
  \end{equation}
  in probability, by \eqref{eq:T12}, Lemma~\ref{l:convZ} (which shows
  that the limit of $\widetilde Z_s^n$ is deterministic and hence the
  second to last line in~\eqref{eq:345} converges to~0), and weak
  convergence $\widetilde{\mathcal R}^n \xRightarrow{n\to\infty}
  \mathcal R$. For every $t\geq 0$, we now have that
  \begin{align}\label{eq:123}
    M^{n,\varphi_n}_t \xRightarrow{n\to\infty} \mathbf E[\varphi(R_t)]
    - \int_0^t \mathbf E[G_{\mathcal R}\varphi(R_s)]ds = 0.
  \end{align}
  Hence, we can write by Doob's inequality
  \begin{align}\label{eq:346}
    \mathbf P(\sup_{0\leq s\leq t}|M^{n,\varphi_n}_s|>\varepsilon)
    \leq \tfrac{1}{\varepsilon} \mathbf E[|M^{n,\varphi_n}_t|]
    \xrightarrow{n\to\infty} 0,
  \end{align}
  since $M_t^{n,\varphi}$ is bounded in $n$ and convergence
  in~\eqref{eq:123} also holds in probability. Then,
  using~\eqref{eq:133},
  \begin{align*}
    \mathbf P( & \sup_{0\leq s\leq t}|\langle \widetilde Z_s^n,
    \varphi_n\rangle - \mathbf E[\varphi(R_s)]| >3\varepsilon) \\ &
    \leq \mathbf P(\sup_{0\leq s\leq
      t}|M^{n,\varphi_n}_s|>\varepsilon) \\ & \qquad + \mathbf
    P\Big(\int_{0}^{t-1/n} \Big|n \cdot \mathbf E\big[
    \varphi_n(\widetilde R^n_{s+1/n})-\varphi_n(\widetilde
    R^n_{s})|\widetilde Z^n_{s}\big] ds- \mathbf E\big[ G_{{\mathcal
        R}} \varphi(R_s)\big]\Big|ds> \varepsilon\Big) \\ & \qquad
    \qquad + \mathbf P\Big(\int_{t-1/n}^t \Big|\mathbf E\big[
    G_{{\mathcal R}} \varphi(R_s)\big]\Big|ds> \varepsilon\Big) \\ &
    \xrightarrow{n\to\infty} 0
  \end{align*}
  by \eqref{eq:346} and \eqref{eq:345} .
\end{proof}

\begin{appendix}

\section{Random probability measures}
In the following, $(E,r)$ is a complete and separable metric space and
$\mathcal P(E)$ is the set of probability measures on (the Borel
$\sigma$-algebra of) $E$, equipped with the topology of weak
convergence. We will state some results about random measures.

\begin{definition}[First two moment measures]
  For a random variable $Z$, taking values in $\mathcal P(E)$, and
  $k=1,2,...$, there is a uniquely determined measure $\mu^{(k)}$ on
  $\mathcal B(E^k)$ such that
  $$ \mathbf E[Z(A_1)\cdots Z(A_k)] = \mu^{(k)}(A_1\times \cdots \times A_k)$$
  for $A_1,...,A_k\in\mathcal B(E)$. This is called the $k$th moment
  measure. Equivalently, $\mu^{(k)}$ is the unique measure such that
  $\mathbf E[\langle Z, \varphi_1\rangle \cdots \langle Z,
  \varphi_k\rangle] = \langle \mu^{(k)}, \varphi_1 \cdots
  \varphi_k\rangle$, where $\langle .,.\rangle$ denotes integration.
\end{definition}

\begin{lemma}[Characterisation of deterministic random
  measures\label{l:detMM}]
  Let $Z$ be a random variable taking values in $\mathcal P(E)$ with
  the first two moment measures $\mu:=\mu^{(1)}$ and $\mu^{(2)}$. Then
  the following assertions are equivalent:
  \begin{enumerate}
  \item There is $\nu \in \mathcal P(E)$ with $Z=\nu$, almost surely.
  \item The second moment measure has product-form, i.e.\ $\mu^{(2)} =
    \mu\otimes \mu$ (which is equivalent to
    $$ \mathbf E[\langle Z, \varphi_1\rangle\cdot \langle Z, 
    \varphi_2\rangle] = \langle \mu, \varphi_1\rangle \cdot \langle
    \mu, \varphi_2\rangle $$ for all $\varphi_1, \varphi_2 \in
    \mathcal C_b(E)$). (This is in fact equivalent to $ \mathbf
    E[\langle Z, \varphi\rangle^2] = \langle \mu, \varphi\rangle^2 $
    for all $\varphi \in \mathcal C_b(E)$).
  \end{enumerate}
  In either case, $\mu=\nu$.
\end{lemma}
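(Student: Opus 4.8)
The plan is to prove the cycle of implications, with the substantive content sitting in $(2)\Rightarrow(1)$. First I would dispose of $(1)\Rightarrow(2)$: if $Z=\nu$ almost surely then $\langle Z,\varphi\rangle=\langle\nu,\varphi\rangle$ is deterministic for every $\varphi\in\mathcal C_b(E)$, so $\mu=\mathbf E[Z]=\nu$ by definition of the first moment measure, and $\mathbf E[\langle Z,\varphi_1\rangle\langle Z,\varphi_2\rangle]=\langle\nu,\varphi_1\rangle\langle\nu,\varphi_2\rangle=\langle\mu,\varphi_1\rangle\langle\mu,\varphi_2\rangle$, which is the product form of $\mu^{(2)}$. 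I would also record at the outset the internal equivalences inside statement $(2)$: the identity $\mu^{(2)}=\mu\otimes\mu$ is equivalent to $\mathbf E[\langle Z,\varphi_1\rangle\langle Z,\varphi_2\rangle]=\langle\mu,\varphi_1\rangle\langle\mu,\varphi_2\rangle$ for all $\varphi_1,\varphi_2\in\mathcal C_b(E)$ by the standard functional/monotone-class characterisation of a finite measure on $E^2$ through its integrals against products of bounded continuous functions; and the seemingly weaker condition $\mathbf E[\langle Z,\varphi\rangle^2]=\langle\mu,\varphi\rangle^2$ for all $\varphi$ implies the bilinear version by the polarisation identity $4\langle z,\varphi_1\rangle\langle z,\varphi_2\rangle=\langle z,\varphi_1+\varphi_2\rangle^2-\langle z,\varphi_1-\varphi_2\rangle^2$ applied to $z=Z$ and to $z=\mu$ and then taking expectations.

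For $(2)\Rightarrow(1)$: fix $\varphi\in\mathcal C_b(E)$. The hypothesis gives $\mathbf E[\langle Z,\varphi\rangle^2]=\langle\mu,\varphi\rangle^2=(\mathbf E[\langle Z,\varphi\rangle])^2$, i.e.\ $\mathbf{Var}(\langle Z,\varphi\rangle)=0$, so $\langle Z,\varphi\rangle=\langle\mu,\varphi\rangle$ almost surely -- but the exceptional null set depends on $\varphi$. To upgrade this to $Z=\mu$ almost surely I would use separability of $(E,r)$ to fix a countable family $\mathcal D=\{\varphi_1,\varphi_2,\dots\}\subseteq\mathcal C_b(E)$ that is measure-determining on $\mathcal P(E)$, i.e.\ such that any two elements of $\mathcal P(E)$ agreeing on all $\varphi_k$ coincide (such a family exists; one may take the functions generating the bounded-Lipschitz metric on $\mathcal P(E)$, or pull back a countable algebra of test functions through an embedding of $E$ into $[0,1]^{\mathbb N}$). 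For each $k$ there is a null set $N_k$ off which $\langle Z,\varphi_k\rangle=\langle\mu,\varphi_k\rangle$; on the complement of the countable union $\bigcup_k N_k$ we then have $\langle Z,\varphi_k\rangle=\langle\mu,\varphi_k\rangle$ for every $k$ simultaneously, hence $Z=\mu$ there. Thus $Z=\mu$ almost surely, and we take $\nu:=\mu$, which also yields the final assertion $\mu=\nu$.

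The only genuine subtlety -- and the step I expect to be the main obstacle -- is this passage from the per-$\varphi$ almost-sure identity to a single almost-sure identity of measures, since a priori it involves uncountably many null sets. This is exactly where completeness and separability of $(E,r)$ enter, through the existence of a countable measure-determining subclass of $\mathcal C_b(E)$; everything else is bookkeeping with the polarisation identity and the definition of the moment measures. I would isolate the existence of such a countable class as a one-line lemma (or a citation) rather than re-derive it.
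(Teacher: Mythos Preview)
Your argument is correct and follows the same variance-zero idea as the paper: the paper computes $\mathbf{Var}[Z(A)] = \mu^{(2)}(A\times A) - \mu(A)^2 = 0$ for each measurable $A\subseteq E$ (using sets rather than test functions) and concludes $Z=\mu$ with $\nu=\mu$. The one noteworthy difference is that the paper does \emph{not} address the passage from per-$A$ almost-sure equality to a single almost-sure identity of measures --- it simply asserts the conclusion --- so your explicit use of a countable measure-determining class (available by separability of $E$) is in fact more rigorous than the paper's own treatment; the paper's version can of course be repaired in the same way via a countable $\pi$-system generating $\mathcal B(E)$.
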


\begin{proof}
  1.$\Rightarrow$2.: This is clear since we have $\mathbf E[Z(A)] =
  \nu(A)$, i.e.\ $\mu = \nu$. Moreover, $\mathbf E[Z(A_1) Z(A_2)] =
  \nu(A_1) \nu(A_2) = \mu(A_1)\mu(A_2) = \mu \otimes \mu (A_1\times
  A_2).$

  2.$\Rightarrow$1.: Since the second moment-measure has product form,
  for any measurable $A\subseteq E$, $\mathbf V[Z(A)] = \mathbf
  E[Z(A)Z(A)] - \mathbf E[Z(A)]^2 = \mu^{(2)}(A\times A) - (\mu(A))^2
  = 0$, i.e.\ the random variable $Z(A)$ has zero variance and
  therefore is deterministic. In particular, $Z(A) = \mathbf E[Z(A)] =
  \mu(A)$ and the assertions follows with $\nu=\mu$.
\end{proof}

\noindent
We end this appendix by recalling Jakubowski's tightness criterion
from \cite{Jakubowksi1986}; see also \cite{Dawson1993}, Theorem 3.6.4.

\begin{proposition}[Jakubowski's tightness criterion\label{P:jaku}]
  Assume the family $\Pi \subseteq \mathcal C_b(E)$ is a vector space
  that separates points. A sequence $\mathcal Z^1 = (Z_t^1)_{t\geq
    0}$, $\mathcal Z^2 = (Z_t^2)_{t\geq 0}$,... of $\mathcal
  P(E)$-valued processes with \cadlag-paths is tight if the following
  holds:
  \begin{enumerate}
  \item $(\mathcal Z^n)_{n=1,2,...}$ satisfies the compact containment
    condition.
  \item For every $f\in\Pi$, the sequence $(f(\mathcal
    Z^n))_{n=1,2,...}$ with $f(\mathcal Z^n) = (f(Z_t^n))_{t\geq 0}$
    is tight.
  \end{enumerate}
\end{proposition}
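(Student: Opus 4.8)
The plan is to prove sufficiency by verifying the classical criterion for tightness in the Skorokhod space $D([0,\infty),S)$, where $S:=\mathcal P(E)$ is Polish and each $f\in\Pi$ is read as the continuous functional $\mu\mapsto\langle\mu,f\rangle$ on $S$, so that $\{f:f\in\Pi\}$ is a point-separating vector subspace of $\mathcal C_b(S)$. By a standard argument it suffices to treat each finite horizon $[0,T]$ separately. On $[0,T]$ I would use the characterization (\cite{ethier}, Ch.~3) that $(\mathcal Z^n)_{n}$ is tight once (A) the compact containment condition holds — which is exactly Assumption~1 — and (B) the Skorokhod modulus satisfies $\lim_{\delta\to0}\limsup_{n}\mathbf P(w'(\mathcal Z^n,\delta,T)\ge\eta)=0$ for every $\eta>0$. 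Thus the entire task reduces to deriving the $S$-valued modulus estimate (B) from the real-valued tightness in Assumption~2.

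The bridge between the two is a purely topological lemma: since $\{f:f\in\Pi\}$ separates points of $S$ and $S$ is Polish, on every compact $L\subseteq S$ the metric $d$ of $S$ is controlled by finitely many coordinates. Precisely, for each compact $L$ and each $\eta>0$ I would produce $f_1,\dots,f_m\in\Pi$ and $\rho>0$ such that $\mu,\nu\in L$ and $\max_{i}|\langle\mu,f_i\rangle-\langle\nu,f_i\rangle|\le\rho$ force $d(\mu,\nu)\le\eta$. This follows by a finite-subcover argument applied to the compact set $\{(\mu,\nu)\in L\times L:d(\mu,\nu)\ge\eta\}$: around each of its points $(\mu_0,\nu_0)$ we have $\mu_0\ne\nu_0$, separation yields some $f\in\Pi$ with $\langle\mu_0,f\rangle\ne\langle\nu_0,f\rangle$, and by continuity $|\langle\mu,f\rangle-\langle\nu,f\rangle|$ stays bounded away from $0$ on a neighbourhood; compactness selects finitely many such $f$.

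With this lemma in hand I would transfer the modulus. Writing $F:=(\langle\,\cdot\,,f_1\rangle,\dots,\langle\,\cdot\,,f_m\rangle):S\to\mathbb R^{m}$, any $\delta$-sparse partition realizing $w'(F(\mathcal Z^n),\delta,T)\le\rho$ has, on the compact-containment event $\{Z^n_t\in L\text{ for all }t\le T\}$, oscillation of $\mathcal Z^n$ at most $\eta$ on each block, by the lemma; hence $w'(\mathcal Z^n,\delta,T)\le\eta$. Consequently
\begin{align*}
  \mathbf P(w'(\mathcal Z^n,\delta,T)\ge\eta)\le \mathbf P(Z^n_t\notin L\text{ for some }t\le T)+\mathbf P(w'(F(\mathcal Z^n),\delta,T)\ge\rho),
\end{align*}
so (B) follows once the vector-valued modulus $w'(F(\mathcal Z^n),\delta,T)$ is made small with high probability and $L$ is chosen through Assumption~1 to make the first term uniformly tiny.

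The main obstacle is the last point: $w'(F(\mathcal Z^n),\cdot)$ is the \emph{joint} Skorokhod modulus of the $\mathbb R^{m}$-valued process $F(\mathcal Z^n)$, and tightness of each coordinate $\langle\mathcal Z^n,f_i\rangle$ (Assumption~2) does not by itself control it, since coordinatewise tightness does not imply joint tightness in the $J_1$ topology when the jumps of different coordinates fail to align (the standard two-bump example). Here the vector-space hypothesis on $\Pi$ is exactly what rescues the argument: every linear combination $\sum_i a_i f_i$ again lies in $\Pi$, so $\langle\mathcal Z^n,\sum_i a_if_i\rangle=\sum_i a_i\langle\mathcal Z^n,f_i\rangle$ is tight for all coefficients. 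By the classical fact that tightness of all finite linear combinations upgrades coordinatewise tightness to joint tightness of $(\langle\mathcal Z^n,f_1\rangle,\dots,\langle\mathcal Z^n,f_m\rangle)$ in $D([0,T],\mathbb R^{m})$, the process $F(\mathcal Z^n)$ is tight and its modulus satisfies $\lim_{\delta\to0}\limsup_n\mathbf P(w'(F(\mathcal Z^n),\delta,T)\ge\rho)=0$. Establishing this linearity-to-joint-tightness step carefully — equivalently, showing that misaligned jumps among the coordinates would surface as a non-tight linear combination — is where the real work lies; the remaining assembly over $T\uparrow\infty$ to pass from $D([0,T],S)$ to $D([0,\infty),S)$ is routine.
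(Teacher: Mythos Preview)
The paper does not give a proof of this proposition at all: it is stated in the appendix purely as a citation, with the words ``We end this appendix by recalling Jakubowski's tightness criterion from \cite{Jakubowksi1986}; see also \cite{Dawson1993}, Theorem 3.6.4.'' There is therefore nothing in the paper to compare your attempt against.

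That said, your outline is essentially Jakubowski's own argument and is sound. The reduction to a finite horizon, the compactness lemma (finitely many $f_1,\dots,f_m\in\Pi$ control the metric uniformly on a compact $L\subseteq\mathcal P(E)$), and the modulus-transfer inequality are all correct and standard. You have also correctly located the genuine difficulty: coordinatewise tightness of the real processes $\langle\mathcal Z^n,f_i\rangle$ does not give joint tightness of $F(\mathcal Z^n)$ in $D([0,T],\mathbb R^m)$, and it is precisely the vector-space hypothesis on $\Pi$---hence tightness of every linear combination---that rules out the misaligned-jump obstruction. Your two-bump heuristic is the right picture; in that example the sum of the two indicator processes already fails the $w'$-modulus condition, so the obstruction is visible in a linear combination. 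Turning this heuristic into a proof that tightness of all one-dimensional projections $a\cdot F(\mathcal Z^n)$ forces tightness of $F(\mathcal Z^n)$ is exactly the technical core of Jakubowski's paper, so if you want a self-contained write-up you should either reproduce that argument or cite it; everything else in your plan is routine.
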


\end{appendix}

\subsubsection*{Acknowledgments}
This research was supported by the DFG through grant Pf-672/5-1.

\bibliographystyle{alpha}


\begin{thebibliography}{ELSS02}

\bibitem[Ave06]{Avery2006}
S.~V. Avery.
\newblock Microbial cell individuality and the underlying sources of
  heterogeneity.
\newblock {\em Nat. Rev. Microbiol.}, 4:577--587, 2006.

\bibitem[BP94]{benjamini}
I.~Benjamini and Y.~Peres.
\newblock Markov chains indexed by trees.
\newblock {\em Ann. Probab.}, 22(1):219--43, 1994.

\bibitem[Daw93]{Dawson1993}
D.A. Dawson.
\newblock Measure-valued {M}arkov processes.
\newblock In P.L. Hennequin, editor, {\em \'Ecole d'\'Et\'e de Probabilit\'es
  de Saint-Flour XXI--1991}, volume 1541 of {\em Lecture Notes in Mathematics},
  pages 1--260, Berlin, 1993. Springer.

\bibitem[EK86]{ethier}
S.~N. Ethier and T.~G. Kurtz.
\newblock {\em Markov processes. Characterization and convergence}.
\newblock Wiley Series in Probability and Mathematical Statistics: Probability
  and Mathematical Statistics. John Wiley \& Sons Inc., New York, 1986.

\bibitem[ELSS02]{Elowitz2002}
M.~B. Elowitz, A.~J. Levine, E.~D. Siggia, and P.~Swain.
\newblock Stochastic gene expression in a single cell.
\newblock {\em Science Signalling}, 297:1183--1186, 2002.

\bibitem[Guy07]{Guyon2007}
J.~Guyon.
\newblock Limit theorems for bifurcating markov chains. application to the
  detection of cellular aging.
\newblock {\em Ann. Appl. Probab.}, 17:1538--1569, 2007.

\bibitem[Jak86]{Jakubowksi1986}
A.~Jakubowski.
\newblock On the {S}korohod topology.
\newblock {\em Ann. Inst. H. Poincar'e}, B22:263--285, 1986.

\bibitem[Kal02]{kall}
Olav Kallenberg.
\newblock {\em Foundations of modern probability}.
\newblock Springer, New York, 2nd edition edition, 2002.

\bibitem[LW03]{liu2003}
W.~Liu and L.~Wang.
\newblock The {M}arkov approximation of the random fields on {C}ayley trees and
  a class of small deviation theorems.
\newblock {\em Statistics \& Probability Letters}, 63(2):113--121, 2003.

\bibitem[LY04]{liu2004}
W.~Liu and W.~Yang.
\newblock Some strong limit theorems for {M}arkov chain fields on trees.
\newblock {\em Probab. Eng. Inf. Sci.}, 18(3), 2004.

\bibitem[MA99]{McAdamsArkin1999}
H.~H. McAdams and A.~Arkin.
\newblock It’s a noisy business! genetic regulation at the nanomolar scale.
\newblock {\em Trends Genet.}, 15:65--69, 1999.

\bibitem[Pel12]{Perlkmans2012}
L.~Pelkmans.
\newblock Using cell-to-cell variability -- a new era in molecular biology.
\newblock {\em Science}, 336:425--426, 2012.

\bibitem[SK76]{SpudichKoshland1976}
J.~L. Spudich and D.~E. Koshland.
\newblock Non-genetic individuality: chance in the single cell.
\newblock {\em Nature}, 262:467--471, 1976.

\bibitem[SP11]{SnijderPelkmans2011}
B.~Snijder and L.~Pelkmans.
\newblock Origins of regulated cell-to-cell variability.
\newblock {\em Nat. Rev. Mol. Cell. Biol.}, 12:119--125, 2011.

\bibitem[Tak06]{takacs}
C.~Takacs.
\newblock On the fundamental matrix of finite state {M}arkov chains, its
  eigensystem and its relation to hitting times.
\newblock {\em Mathematica Pannonica}, 17(2):183--193, 2006.

\bibitem[Yan03]{yang2003}
W.~Yang.
\newblock Some limit properties for markov chains indexed by a homogeneous
  tree.
\newblock {\em Statistics \& Probability Letters}, 65(3):241--250, 2003.

\bibitem[YL06]{yang2006}
W.~Yang and W.~Liu.
\newblock Strong law of large numbers and {S}hannon-{M}cmillan theorem for
  {M}arkov chain fields on trees.
\newblock {\em IEEE Trans. Inf. Theor.}, 48(1):313--318, 2006.

\end{thebibliography}

\end{document}